\DeclareMathOperator{\Z}{\mathbb{Z}}
\DeclareMathOperator{\C}{\mathcal{C}}
\DeclareMathOperator{\MDC}{\mathrm{degmax}}
\DeclareMathOperator{\ord}{\mathrm{ord}}
\newcommand{\softO}{\widetilde{O}}
\newtheorem{lemma}{Lemma}
\newtheorem{proposition}[lemma]{Proposition}
\newtheorem{definition}[lemma]{Definition}
\newtheorem{theorem}[lemma]{Theorem}
\newcommand{\N}{\mathbb Z_{\geq 0}}
\newcommand{\F}{\mathbb F}
\newcommand{\Og}{O_g}
\DeclareMathOperator{\Res}{Res}
\DeclareMathOperator{\Err}{Err}
\DeclareMathOperator{\aff}{aff}
\newcounter{countproblem}
\newenvironment{problem}[1] 
{ 
  \refstepcounter{countproblem} 
  \begin{center} 
    \begin{minipage}{0.9\linewidth} 
      \begin{framed} 
        {\bf {#1}.} 
} 
{   
      \end{framed} 
    \end{minipage} 
  \end{center} 
}
\begin{document}
\title{Improved Complexity Bounds for Counting Points on Hyperelliptic Curves}
\author{Simon Abelard, Pierrick Gaudry and Pierre-Jean Spaenlehauer \\[0.2cm]
{\normalsize Université de Lorraine, CNRS, Inria}}
\date{}
\maketitle

\begin{abstract}
  We present a probabilistic Las Vegas algorithm for computing the local zeta
  function of a hyperelliptic curve of genus $g$ defined over $\mathbb
  F_q$.
  It is based on the approaches by Schoof and Pila combined
  with a modelling of the $\ell$-torsion by structured polynomial
  systems.
  Our main result improves on previously known complexity bounds by
  showing
  that there exists a constant $c>0$ such that, for any fixed $g$, this
  algorithm has expected time and space complexity $O((\log q)^{c g})$ as
  $q$ grows and the characteristic is large enough.
\end{abstract}

\section{Introduction} 
Since the discovery of Schoof's algorithm \cite{Sc85}, the complexity of counting 
points on curves and Abelian varieties defined over finite fields has attracted a lot of attention due to its numerous applications in
cryptology, number theory and algebraic geometry.
In this paper, we investigate the complexity of computing the local zeta
function of hyperelliptic curves of fixed large genus. We propose a probabilistic
algorithm which relies on the same foundations as Schoof's \cite{Sc85} and
Pila's algorithms \cite{Pi90}.

When the characteristic of the base field $\mathbb F_q$ is small, Kedlaya's and Satoh's
approaches
\cite{Ked01,Satoh00} and their variants compute very efficiently the number of
rational points of Jacobians of hyperelliptic curves. We can also mention
Lauder-Wan's~\cite{LaWa02} and Lauder's~\cite{Lauder04} methods that can handle very general
varieties. The current best algorithms in this family for rather general curves are by
Tuitman~\cite{Tuitman16,Tuitman17}.
However, the complexities
of these $p$-adic algorithms are exponential in $\log(p)$, where $p$ is the
characteristic of the base field. This dependency can be made as low as
$\sqrt{p}$, thanks to the work of Harvey~\cite{Harvey15}.
Another line of research aims at taking profit of
extra structure of the curve, assuming that this structure is known in advance and described in a
convenient way. 
The most popular case is the Complex Multiplication method~\cite{AtMo93},
and in~\cite{gaudry2011counting} it is shown how to exploit
real multiplication for counting points on genus 2 curves. When there is no
such explicitly known additional structure and the characteristic of the base field is
large, Schoof-Pila's $\ell$-adic algorithms are the main tools for counting
points.

These $\ell$-adic methods were introduced for elliptic curves in \cite{Sc85}, and
later extended to Abelian varieties in \cite{Pi90}. In particular, Pila showed
that the local zeta function of a $g$-dimensional Abelian variety can be computed within $O(\log(q)^\Delta)$
operations, where $\Delta$ and the constant in the $O()$ are functions of $g$
(but they do not depend on $q$). This complexity result requires some
assumptions on the presentation of the Abelian variety which are
satisfied by Jacobians of hyperelliptic curves given via a
Weierstrass form~\cite{Pila05}.  Complexity
improvements were obtained in \cite{HI98} and \cite{AH01}. The latter article gives
a deterministic algorithm for counting points on hyperelliptic curves with
complexity $(\log q)^{O(g^2\log g)}$. Pila's algorithm and its variants may
differ from Schoof's algorithm when specialized to the case of elliptic curves, but they are
nonetheless related because they all rely on computing the characteristic
polynomial $\chi$ of the Frobenius endomorphism $\varphi$ modulo a prime number
$\ell$ for sufficiently many such primes to deduce the numerator of the local zeta function
of the curve (which is in fact the reciprocal polynomial of $\chi$).

More precisely, let $\mathcal C$ be a hyperelliptic curve of genus $g$ and $J$
be its Jacobian. When $\ell$ is a prime different from the characteristic
of the base field, the $\ell$-torsion group $J[\ell]$ is isomorphic to $(\Z/ \ell\Z)^{2g}$ and the characteristic polynomial of
the restriction of $\varphi$ on $J[\ell]$ is exactly $\chi \bmod \ell$.
Furthermore, $\chi(1)=\# J(\F_q)$. The principle of Schoof-Pila's algorithm is
to pick elements $D$ in $J[\ell]$ and to find conditions on the coefficients
$s_0, \ldots, s_{2g-1}\bmod \ell$ such that
$\varphi^{2g}(D)+s_{2g-1}\varphi^{2g-1}(D)+\cdots+s_0D$ is equal to $0$
in $J[\ell]$. By testing all the tuples $(s_0,\ldots, s_{2g-1})$ up to
the symmetries coming from the functional equation of $\chi$
(and possibly many $D$), the number of
possibilities for $\chi\bmod\ell$ is reduced until only one remains. The
numerator of the zeta function is then obtained by repeating
this procedure for many $\ell$ and by using
Weil's conjectures to bound the absolute value of the coefficients.

For such a strategy, it is of the utmost importance that we get a description
of the $\ell$-torsion for which computations are reasonably easy to perform. In
the elliptic case, computations in the $\ell$-torsion subgroup are achieved by
computing in the ring $\F_q[X]/ \psi_{\ell}(X)$ where $\psi_{\ell}$ is
the $\ell$-division polynomial, which has  degree
$O(\ell^2)$. The dominant part of the complexity is the computation of
$\varphi^2(X)$ in this quotient ring. In the genus 2 case, the bottleneck of the
algorithm is no longer the computation of the powers of $\varphi$ but that of
a convenient algebraic representation of the $\ell$-torsion \cite{GS12}.
This appears to be also the case for $g>2$. In order to reach the
desired complexity, our main task is to compute such a representation
efficiently. This is the central part of the proof of the complexity bound, and
it is obtained by combining a special modelling of the $\ell$-torsion with the 
geometric resolution algorithm \cite{GiuLecSal01}, and by using 
multi-homogeneous B\'ezout bounds. More precisely, we show how to
construct a polynomial system whose solutions are the $\ell$-torsion
points. This system involves two sets of variables, the first containing
a small number $O(g)$ of variables, each of them occurring with a degree
that is polynomial in $\ell$, and the second set containing many more variables
but all of them occur with a degree that can be bounded independently of
$\ell$. This bi-homogeneous structure is the key to obtain a complexity bound that is
better than for an unstructured system with the same number of variables
and the same degree.

Another important ingredient in the proof of our main result is the
extension of
degree bounds for the coefficients of Cantor's analogue to division
polynomials~\cite{Ca94}. Indeed, these polynomials are involved in the
modelling of the $\ell$-torsion and the degrees of their coefficients have a
direct impact on the complexity of solving the polynomial system representing
the $\ell$-torsion.

We finally mention that our result is of a purely theoretical nature. In the
case of genus 2 and 3, the geometric resolution algorithm is at
best quadratic in the degree of the $\ell$-torsion ideal, which brings no
improvement over a more direct study of the polynomial systems describing the
$\ell$-torsion. And for curves of larger (fixed) genus, we are still far from a
situation where practical experiments could be run.

\paragraph{Organization of the paper.} Section \ref{sec:schoof} describes a
general algorithm for point-counting on Abelian varieties along with its
complexity, assuming that the $\ell$-torsion can be efficiently computed.
Section \ref{sec:polsys} establishes the complexity result for
multi-homogeneous polynomial systems that is required to obtain our claimed
complexity bound. Section \ref{sec:generic} contains the modelling of the
$\ell$-torsion under some mild assumptions on its structure. 
Finally, Section~\ref{sec:nongeneric} describes the complete
modelling of the $\ell$-torsion, which is faithful even if the assumptions
required in Section \ref{sec:generic} are not satisfied.

\paragraph{Acknowledgements.} We are grateful to \'Eric Schost and Guillermo
Matera for fruitful discussions and for pointing out important references. We
also wish to thank anonymous referees for their comments which helped improve the
paper.

\section{Overview of the main result}\label{sec:schoof}

Our main result is a probabilistic algorithm and a complexity bound for solving the following problem.

\begin{problem}{Computing local zeta functions of hyperelliptic curves}
  Given an odd prime power $q$, a positive integer $g$ and a squarefree univariate polynomial
  $f\in\mathbb F_q[X]$ of degree $2\, g+1$, let $\C$ be the
  hyperelliptic curve with Weierstrass form $Y^2 = f(X)$. Compute the numerator
  $P_{\C}\in\mathbb
  Z[T]$ of the local zeta function of $\C$:
  $$Z(\C/\mathbb F_q, T) = \exp\left(\sum_{i=1}^\infty\#\C(\mathbb
  F_{q^i})\cdot \dfrac{T^i}{i}\right) = \dfrac{P_{\C}(T)}{(1-T)(1-qT)}.$$
\end{problem}

The special form of the denominator of the local zeta function is a consequence
of Weil's conjectures. We refer to \cite[Ch.~XI,
Thm.~5.2]{lorenzini1996invitation} for more details. 
Throughout the paper, we
shall assume that the
characteristic of $\mathbb F_q$ is sufficiently large compared to $\log q$. This
assumption is required by a variant of Bertini's theorem
(Proposition~\ref{prop:regred}). 

Our main result is as follows.

\begin{theorem}\label{th:main} 
  There exists an explicitly computable constant
  $c$ such that for all genus $g$, there exists an integer $q_0(g)$ such that
  for all prime power $q=p^n$ larger than $q_0(g)$ with $p\geq (\log
  q)^{cg}$ and for all hyperelliptic curves
  $\C$ of genus $g$ defined over $\F_q$, the numerator $P_{\C}$ of the local zeta
  function of $\C$ can be computed with a probabilistic algorithm in expected
  time bounded by $(\log q)^{cg}$.
\end{theorem}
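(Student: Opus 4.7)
The plan is to follow the Schoof--Pila blueprint sketched in the introduction and to instantiate each step with the tools developed in the later sections. First I would fix a set $S$ of auxiliary primes $\ell$, distinct from the characteristic of $\F_q$, large enough to recover $\chi$ by the Chinese Remainder Theorem: Weil's conjectures bound each coefficient $s_i$ of $\chi$ by $\binom{2g}{i}q^{i/2}$, so it suffices to take every prime $\ell$ up to a threshold of order $g\log q$, yielding $|S|=O(g\log q/\log\log q)$ primes by the prime number theorem. The algorithm computes $\chi\bmod \ell$ separately for each $\ell\in S$, CRT-combines the results, and outputs the reciprocal polynomial as $P_\C$. To prove the theorem it therefore suffices to exhibit, uniformly in $\ell\in S$, a Las Vegas algorithm computing $\chi\bmod\ell$ in time $(\log q)^{c'g}$ for some explicit $c'$.

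For each such $\ell$, the computation of $\chi\bmod\ell$ proceeds in two stages. The first stage produces an algebraic data structure representing $J[\ell]$ together with the Frobenius action on it; this stage is the crux of the argument and I will describe it in the next paragraph. Granted such a representation, the second stage samples random $D\in J[\ell]$, computes the iterates $\varphi^i(D)$ for $0\le i\le 2g$, and, for each candidate tuple $(s_0,\ldots,s_{2g-1})\bmod\ell$ compatible with the functional equation of $\chi$, tests whether $\varphi^{2g}(D)+\sum_{i<2g} s_i\varphi^i(D)$ vanishes in $J[\ell]$. Iterating over $O(g)$ independent samples $D$ prunes the candidate set to a single tuple with high probability, and each test costs a constant number of arithmetic operations in the chosen representation.

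The main obstacle is precisely the first stage: building and manipulating a compact enough representation of $J[\ell]$. For this I would use the bi-homogeneous polynomial system developed in Sections~\ref{sec:generic} and~\ref{sec:nongeneric}, which models $J[\ell]$ as the zero locus of equations in two blocks of variables: a small block of $O(g)$ variables whose degrees are polynomial in $\ell$, and a larger block in which every variable appears with degree bounded independently of $\ell$. Feeding this system into the geometric resolution algorithm of Giusti--Lecerf--Salvy, whose complexity is controlled by the multi-homogeneous Bézout estimates of Section~\ref{sec:polsys}, yields a geometric resolution of $J[\ell]$ in time polynomial in a Bézout number of the form $\ell^{O(g)}$, times polynomial factors in $\log q$ coming from the size of the coefficients of Cantor's analogues of the division polynomials; the extension of \cite{Ca94} announced in the introduction is what controls those coefficient sizes explicitly.

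Combining these three ingredients, the per-prime cost is bounded by $(\log q)^{c_1 g}$ for a constant $c_1$ determined by Sections~\ref{sec:polsys}--\ref{sec:nongeneric}; summing over the $|S|=O(g\log q)$ primes, together with CRT recombination and preprocessing of the Weierstrass model, yields the announced bound $(\log q)^{cg}$. The algorithm is Las Vegas because both the random points $D$ used in the second stage and the generic linear forms used inside the geometric resolution each succeed with positive probability per iteration, and wrong guesses are detected. Finally, the assumption $p\ge(\log q)^{cg}$ is exactly what is needed for the Bertini-type Proposition~\ref{prop:regred} to apply to the polynomial systems encountered during the run, which in turn validates the regularity hypotheses underlying the complexity of the geometric resolution.
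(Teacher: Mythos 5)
Your overall architecture coincides with the paper's: Weil bounds plus CRT reduce the theorem to computing $\chi\bmod\ell$ for all primes $\ell=O(g\log q)$, and the crux --- an explicit representation of $J[\ell]$ --- is correctly delegated to the bi-homogeneous modelling of Sections~\ref{sec:generic}--\ref{sec:nongeneric} combined with the geometric resolution algorithm and the multi-homogeneous B\'ezout bounds of Section~\ref{sec:polsys}; this is exactly the content of Proposition~\ref{prop:Jl}, and your accounting of where the hypothesis $p\geq(\log q)^{cg}$ enters (the Bertini-type Proposition~\ref{prop:regred}) matches the paper's.

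The gap is in your second stage, the recovery of $\chi\bmod\ell$ from the representation of $J[\ell]$. Testing, for random $D\in J[\ell]$, whether $\varphi^{2g}(D)+\sum_{i<2g}s_i\varphi^i(D)=0$ can only determine the set of monic degree-$2g$ polynomials annihilating every element of $J[\ell]$, i.e.\ the monic degree-$2g$ multiples of the \emph{minimal} polynomial of $\varphi$ on $J[\ell]$. Whenever $\varphi$ acts non-cyclically on $J[\ell]\cong(\Z/\ell\Z)^{2g}$ --- for instance when $J[\ell]$ contains many rational points, or more generally whenever the minimal polynomial is a proper divisor of $\chi\bmod\ell$ --- no number of samples $D$ will prune the candidate set to a single tuple, and the functional equation does not in general resolve the ambiguity; so the claim that $O(g)$ independent samples isolate the correct tuple with high probability fails in degenerate cases. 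The paper avoids this by exploiting the fact that the geometric resolutions, after factoring their univariate polynomials, give the complete list of all $\ell^{2g}$ torsion points: it computes an $\F_\ell$-basis of $J[\ell]$ by brute force together with the decomposition of every element on that basis, writes the action of $\varphi$ as a $2g\times 2g$ matrix over $\F_\ell$, and takes the characteristic polynomial of that matrix. Since $\ell^{2g}=(\log q)^{O(g)}$, this brute-force linear algebra fits in the complexity budget and yields $\chi\bmod\ell$ unconditionally. Your argument needs this step (or an equivalent computation of the ranks of $f(\varphi)^k$ for the irreducible factors $f$ of the minimal polynomial) to be complete; relatedly, the Las Vegas verification of the $J[\ell]$ computation itself should be made explicit, namely checking that the varieties $J_w[\ell]$ found account for exactly $\ell^{2g}-1$ nonzero torsion elements.
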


This complexity result is summarized by the notation $O_g((\log
q)^{O(g)})$,
keeping in mind that $g$ is fixed and $q$ grows to infinity. Indeed, 
such a complexity statement can hide any factor that
depends only on $g$: a
running time in $f(g)(\log q)^{cg}$ can be transformed into $(\log
q)^{c'g}$ by taking a value $c'$ larger than $c$ and adjusting $q_0(g)$,
so that $|f(g)| \le (\log q_0(g))^{(c'-c)g}$.

A typical example used in this article is the multiplication of two
polynomials of degree $d=(\log q)^{O(g)}$. Using FFT-based techniques,
this can be done in $\softO(d)$ operations, which can be rewritten as
$(\log q)^{O(g)}(\log((\log q)^{O(g)}))^k$ for some constant $k$ and is
therefore again in $O_g(\log(q)^{O(g)})$. Here the function $f(g)$ that has
been hidden in the operation is polynomial in $g$, but we will have cases
where it is a combinatorial factor that grows very quickly with $g$ and
we make no effort to optimize it.

A classical geometrical object associated to a genus $g$ curve is its Jacobian
variety. Over the algebraic closure of $\mathbb F_q$, it can be described as
the multiset of at most $g$ points of the curve and it is endowed with an
Abelian group structure (it is isomorphic to the degree-$0$ subgroup of the
Picard group of the curve). The Frobenius map acts in a natural way on this
Jacobian and it is compatible with its $\Z$-module structure.

Throughout this paper, $\C$ is a hyperelliptic curve defined over $\mathbb F_q$ with at least one rational
Weierstrass point. Hence $\C$ admits a Weierstrass model $y^2=f(x)$, where $f$ is a
squarefree monic polynomial of degree $2g+1$. 
If $\C$ does not have any rational Weierstrass point, then
we can extend the base field so that there exists a rational Weierstrass
point that we send to infinity. The degree of the extension does not
depend on $q$ (it is at most linear in $g$), so that this will not affect
our complexity result.

For practical computations, we need a coordinate system to represent points on
the Jacobian of $\C$: they
shall be encoded via their Mumford
representation using $2g$ coordinates. The group law on points in the Jacobian can be performed
with Cantor's algorithm~\cite{cantor1987computing} which operates with elements in Mumford
representation at a cost of $\softO(g)$ base field operations.

The algorithm that allows to prove the theorem is essentially the same as
the one proposed by Pila for Abelian varieties, which is itself inspired
by Schoof's algorithm for counting points on elliptic curves.
This algorithm relies on a few classical results for curves defined over finite
fields:

\begin{itemize}
  \item The numerator $P_{\C}$ of the local zeta function is the reciprocal of the characteristic
    polynomial of the Frobenius morphism on the Jacobian variety $J$ of
    $\C$ \cite[Thm.~5.2]{lorenzini1996invitation};
  \item For prime numbers $\ell$ not dividing $q$, the $\ell$-torsion $J[\ell]$ of the
    Jacobian variety is isomorphic (as an Abelian group) to
    $(\Z/\ell\Z)^{2g}$ \cite[Sec.~II.6, Prop.~page 64]{mumford1974abelian}, \cite[Thm.~4.73]{cohen2005handbook}; Therefore $P_{\C}\bmod \ell$ is the
    reciprocal of the characteristic polynomial of the Frobenius seen as an
    endomorphism of $J[\ell]\cong (\Z/\ell\Z)^{2g}$;
  \item The Weil conjectures
    imply that $P_{\C}$ has the following form over the complex numbers: $P_{\C}(T) = \prod_{i=1}^{2g} (1 - u_i T)$
    with
    $\lvert u_i\rvert = q^{1/2}$ \cite[Ch.~VIII, Thm.~6.1]{lorenzini1996invitation}. Moreover, if $a_0,\ldots, a_{2g}$ denote the
    coefficients of $P_{\C}$, the functional equation implies that
    $a_{2g-i} = q^{g-i} a_i$. Consequently, the absolute value of the
        coefficients of $P_{\C}\in\Z[T]$ are bounded
    by $\binom{2g}{g}q^g$.
\end{itemize}

\begin{algorithm}[ht] \label{algo:pila}
  \KwData{$q\in\mathbb Z_{>0}$ a prime power, and $f\in\mathbb F_q[X]$ a monic squarefree
    univariate polynomial.}
    \KwResult{The characteristic polynomial $\chi\in\mathbb Z[T]$ of the Frobenius
    endomorphism on the Jacobian $J$ of the hyperelliptic curve defined over
    $\mathbb F_q$ with
  Weierstrass form $Y^2 = f(X)$.}
 
  $\ell \gets 1$\;
  $R\gets 1$\;

  \While{$R\leq 2\binom{2g}{g}q^g + 1$} {
	$\ell\gets${\sf NextPrime}$(\ell)$\;
	\If{$\ell$ divides $q$}{$\ell\gets${\sf NextPrime}$(\ell)$\;}
        Compute a description of $J[ \ell]$\;
	Compute a $2g\times 2g$ matrix $F$ with coefficients in $\Z/\ell\Z$
	representing the action of the Frobenius on $J[\ell]\cong
	(\Z/\ell\Z)^{2g}$\;
        Compute the characteristic polynomial $\chi \bmod \ell$ of the matrix
	$F$\;
	$R \gets R\cdot\ell$\;
  }
    Reconstruct $\chi$ using the Chinese Remainder Theorem.

    \caption{A bird's eye view of Pila's point counting algorithm for
    hyperelliptic curves.}
\end{algorithm}

Pila's algorithm reconstructs the numerator of the local zeta function of $\C$
by computing the action of the Frobenius on the $\ell$-torsion for
sufficiently-many prime numbers $\ell$ and by using the Chinese Remainder
Theorem. A bird's eye view of this algorithm is
given in Algorithm~\ref{algo:pila}.
The main difficulty resides in the step where one computes an explicit
description of $J[\ell]$. Since $J[\ell]$ is a $0$-dimensional variety of
degree $\ell^{2g}$, what we will compute is a geometric resolution of the
corresponding radical ideal, that is a univariate squarefree polynomial
$F_\ell(T)$, together with $2g$ coordinate polynomials $\gamma_i(T)$,
such that the coordinates of the $\ell$-torsion elements are the evaluations of
the vector $(\gamma_1(T),\ldots, \gamma_{2g}(T))$ at the roots of
$F_\ell$.

To be more precise, the Mumford coordinates are in fact a set of $g$
affine systems of coordinates, each corresponding to a different weight of the
represented divisor (the definition is recalled in
Section~\ref{sec:generic}). The variety $J[\ell]$ will accordingly be
represented by a set of $g$ geometric resolutions, each encoding
$\ell$-torsion divisors of a given weight $w\in[1,g]$. Generically, we expect that
all the elements in $J[\ell]$ have weight $g$, except for the neutral element which
has weight 0.
Most of the article is dedicated to computing efficiently this representation
for $J[\ell]$. The cornerstone of the proof of Theorem~\ref{th:main} relies on the following statement.

\begin{proposition}\label{prop:Jl}
    Let $\C$ be a hyperelliptic curve of genus $g$ over $\F_q$ with Weierstrass
    form $Y^2 = f(X)$ ($f$ monic of degree $2g+1$) and $J$
    be its Jacobian variety. Let $\ell>g$ be a prime not dividing $q$. Assuming
    that the characteristic of $\mathbb F_q$ is sufficiently large 
    as in Theorem~\ref{th:main}, there is a
    Las Vegas probabilistic algorithm which takes as input $q, \ell, f$ and which computes geometric
    resolutions for the varieties $\{J_w[\ell]\}_{w\in[1,g]}$ of $\ell$-torsion points of weight
    $w$ in the Jacobian variety. This algorithm can be implemented by a Turing
    machine with space and expected time $O_g\left((\ell\,\log q)^{O(g)}\right)$.
\end{proposition}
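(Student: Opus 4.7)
The plan is to build a polynomial system whose zero locus is $J_w[\ell]$ for each weight $w\in[1,g]$, then feed it to the geometric resolution algorithm of Giusti--Lecerf--Salvy while tracking its bi-homogeneous degree through the multi-homogeneous Bézout bound of Section~\ref{sec:polsys}. For each weight $w$, I would use Mumford coordinates to represent weight-$w$ divisors $D$ by the $2w$ coefficients of the pair $(u(X),v(X))$. The condition $\ell\cdot D = 0$ in $J$ will be encoded via Cantor's hyperelliptic analogue of the division polynomials: the coefficients of the reduced Mumford pair representing $\ell D$ must match the trivial divisor, which turns into a collection of polynomial equations in the Mumford coordinates. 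The extended degree bounds for the coefficients of Cantor's polynomials mentioned in the introduction control how the degree of these equations depends on $\ell$.

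The crucial structural observation, which is what yields the announced complexity, is that this system can be split into two blocks of variables. The first block consists of the $O(g)$ Mumford coordinates themselves, each occurring with degree polynomial in $\ell$ (this is where Cantor's $\ell$-division polynomials contribute). The second block consists of auxiliary variables used to unfold the high-degree constraints---typically the abscissas of the geometric points of $D$ and auxiliary variables coming from Cantor-style doublings and additions---each of which occurs with a degree that can be bounded by a function of $g$ only, independent of $\ell$. A naive total-degree Bézout estimate over all variables would be exponential in $g\log\ell$, but the multi-homogeneous bound of Section~\ref{sec:polsys}, applied with this $O(g)$-versus-many split, pins the degree of the $\ell$-torsion ideal down to $(\ell\,\log q)^{O(g)}$.

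Once the system and its degree bound are in place, I would invoke the geometric resolution algorithm, whose cost is quasi-linear in the degree of the ideal, the number of variables, and the complexity of straight-line evaluation of the input equations, modulo the standard randomization and lifting steps that make it Las Vegas. This produces exactly the univariate polynomial $F_\ell(T)$ together with the coordinate polynomials $\gamma_i(T)$ describing $J_w[\ell]$. Looping over $w\in[1,g]$ only contributes a factor depending on $g$, which is absorbed by the $O_g$ notation, as is the cost of the underlying modular arithmetic over $\F_q$.

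The main obstacle I expect is twofold. Primarily, producing the system itself in a form simultaneously amenable to the multi-homogeneous Bézout analysis and to the geometric resolution algorithm: one must actually exhibit the auxiliary variables that keep individual degrees low, ensure that the resulting ideal is radical and generically reduced (this is where the variant of Bertini's theorem given by Proposition~\ref{prop:regred} and the hypothesis $p\geq(\log q)^{cg}$ intervene), and guarantee that the modelling remains faithful in degenerate cases where the generic assumptions fail---this is precisely what Section~\ref{sec:nongeneric} is advertised to handle. Secondarily, one must bound the degrees in $q$ of the coefficients of Cantor's $\ell$-division polynomials so that the bit complexity, not merely the algebraic complexity, stays within $(\ell\,\log q)^{O(g)}$.
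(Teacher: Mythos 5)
Your overall architecture (bi-homogeneous modelling, multi-homogeneous B\'ezout bound, Bertini-type preparation, geometric resolution) is the paper's, but the central modelling step --- the one that actually produces a system to which Proposition~\ref{prop:mainpolsys} applies --- is missing, and the sketch you give in its place would fail. You propose to encode $\ell\cdot D=0$ by computing the Mumford pair of $\ell D$ via ``Cantor-style doublings and additions'' with auxiliary variables for the intermediate steps. Any such addition chain introduces a number of auxiliary variables that grows with $\log\ell$, so the cardinality of your second block is no longer bounded by a function of $g$ alone; Proposition~\ref{prop:mainpolsys} explicitly requires $n_y<h(g)$ independently of $\ell$, and the resulting B\'ezout bound $d_y^{n_y}$ degrades to $\ell^{O(g\log g)}$ or worse, which is the Adleman--Huang regime the paper is trying to beat. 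The paper never multiplies a general divisor by $\ell$. It writes $D=\sum_{i=1}^{g}(P_i-\infty)$, uses Cantor's closed-form division polynomials $\delta_\ell,\varepsilon_\ell$ to express each $\ell\cdot(P_i-\infty)=\langle u_i,v_i\rangle$ with no auxiliary variables (their coefficients are polynomials of degree $O_g(\ell^3)$ in $x_i$; bounding these degrees is Lemma~\ref{lem:deg_delta} and is over $\F_q$, not ``in $q$''), and certifies that $\sum_i\ell\cdot(P_i-\infty)$ is principal by the existence of a function $P(X)+YQ(X)$ vanishing on the union of the supports, i.e.\ the $g$ congruences $P+Qv_i\equiv 0\bmod u_i$. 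The high-degree block is then the $2g$ coordinates $(x_i,y_i)$ and the low-degree block is the $g^2-g$ coefficients of $P,Q$ --- note that this is the opposite of your assignment, which puts ``the abscissas of the geometric points of $D$'' in the low-degree block; those abscissas are exactly where the $\ell^{O(1)}$ degrees live.

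Two further points are glossed over in a way that matters. First, the geometric resolution over $\F_q$ (Theorem~\ref{thm:CafMat}) is Monte Carlo, and no amount of ``standard randomization and lifting'' makes it Las Vegas; the paper gets a Las Vegas algorithm only because the output can be certified by checking that the solution counts over all weights sum to $\ell^{2g}-1$. Second, that certificate only validates if the non-generic cases are genuinely enumerated and solved: one needs the $O_g(1)$-sized family of normalized non-genericity tuples, a radical zero-dimensional system for each (radicality coming from \'etaleness of multiplication by $\ell$, plus the $g!$-symmetrization argument for passing from Mumford coordinates to the $(x_i,y_i)$), and a union of geometric resolutions per weight. Deferring all of this to ``Section~\ref{sec:nongeneric} is advertised to handle it'' leaves the proof of the proposition essentially open.
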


Assuming this complexity bound, performing a complexity analysis as done
in~\cite{Pi90} leads to a complexity bound for Algorithm~\ref{algo:pila} that
corresponds to Theorem~\ref{th:main}. We recall it here for completeness,
with some simplifications due to the fact that we consider a probabilistic
algorithm, so we can factor polynomials using Cantor-Zassenhaus' algorithm.

\begin{proof}[Proof of Theorem~\ref{th:main} assuming
    Proposition~\ref{prop:Jl}.]

    By Weil's bounds, the absolute values of the coefficients of the
    characteristic polynomial $\chi$ are bounded by $\binom{2g}{g}q^g$.
    Therefore at the end of the loop of Algorithm~\ref{algo:pila}, these
    coefficients are completely determined by their values modulo all the
    primes $\ell$ that have been explored.  It follows from
    \cite[Cor.~10.1]{tenenbaum1995}
    that the largest $\ell$ in the loop is at most linear in $g\log q$.
    From this and Proposition~\ref{prop:Jl}, computing the description
    of $J[\ell]$ as a union of geometric resolutions for all the
    $J_w[\ell]$ can be achieved within expected complexity $O_g\left((\log
    q)^{O(g)}\right)$.

    Factoring the univariate polynomials involved in the geometric
    resolutions can be done within the same time bound $O_g\left((\log
    q)^{O(g)}\right)$,
    since the sum of their degrees is $\ell^{2g}$ and factoring polynomials in finite
    fields can be done in time linear in $\log(q)$ and quasi-quadratic in the
    degree \cite[Thm.~14.14]{GatGer}. Therefore, it is possible to construct a Mumford
    representation for each $\ell$-torsion divisor within the same complexity, each of them possibly
    defined over a different extension of $\F_q$. In fact, due to the
    rationality of the group law that acts on $J[\ell]$, one of these
    extensions of $\F_q$ contains all the others.

    Using elementary linear algebra for the Frobenius endomorphism
    $\varphi$ acting on $J[\ell]$ (seen as an $\F_\ell$-vector space),
    we can deduce $\chi_\ell = \chi \bmod \ell$.
    We first compute a basis of $J[\ell]$ by brute force and a
    dictionary of how all elements decompose on it. Then, the action of
    $\varphi$ on the basis elements can be computed and the result is a
    matrix whose characteristic polynomial is $\chi_\ell$. All of this
    fits in the $O_g((\log q)^{O(g)})$ complexity bound.
    The loop is repeated $O_g(\log q)$ times, and this additional factor
    does not affect the overall complexity.
\end{proof}

\section{Polynomial systems}\label{sec:polsys}

This section is devoted to describing tools that we will use to estimate the
complexity of computing a convenient representation of the $\ell$-torsion of
the Jacobian of hyperelliptic curves.

We start by fixing some notation.
In the sequel, $\overline{\mathbb F_q}$ denotes the
algebraic closure of $\mathbb F_q$.
For an ideal $I\subset \mathbb F_q[X_1,\ldots, X_n]$, we call
dimension of $I$ and note $\dim(I)$ the Krull dimension of the quotient
ring $\mathbb F_q[X_1,\ldots, X_n]/I$.  Moreover, by identifying a point
$(\lambda_0,\ldots, \lambda_n)\in\overline{\mathbb F_q}^{n+1}$ with the
polynomial $\lambda_0 + \lambda_1 X_1 + \dots +\lambda_n
X_n\in\overline{\mathbb F_q}[X_1,\ldots, X_n]$,
there is a dense Zariski open
subset $\mathcal O\subset(\overline{\mathbb F_q}^{n+1})^{\dim(I)}$ such that for any
$(\ell_1,\ldots, \ell_{\dim(I)})\in\mathcal O$, the algebra
$\overline{\mathbb F_q}[X_1,\ldots, X_n]/(I+\langle \ell_1,\ldots,
\ell_{\dim(I)}\rangle)$ is a finite dimensional $\overline{\mathbb F_q}$-vector space of
constant dimension, which is called the degree of $I$.
A sequence $(f_1,\ldots, f_i)\in
\mathbb F_q[X_1,\ldots, X_n]^i$ is regular if $\langle f_1,\ldots,
f_i\rangle\neq \mathbb F_q[X_1,\ldots, X_n]$ and for any
$j\in[2, i]$, $f_j$ does not divide zero in $\mathbb F_q[X_1,\ldots, X_n]/\langle f_1,\ldots,
f_{j-1}\rangle$. 
The sequence
$(f_1,\ldots, f_i)$ is reduced if every intermediate ideal $\langle
f_1,\ldots, f_j \rangle$ with $j\in[1,i]$ is radical.

\paragraph{Geometric resolutions.} For describing $0$-dimensional (i.e.
finite) sets $V\subset \overline{\mathbb F_q}^n$ where $V$ is defined
over $\F_q$, we use
a data structure called a geometric resolution of $V$. The terminology
here is borrowed from \cite{cafure2006fast}, see also \cite{GiuLecSal01}. An 
$\mathbb F_{q^e}$-geometric resolution of
$V$ is a tuple $( (\ell_1,\ldots, \ell_n), Q, (Q_1,\ldots, Q_n))$ where:
\begin{itemize}
  \item The vector $(\ell_1,\ldots,\ell_n)\in\mathbb F_{q^e}^n$ is such
    that the linear form 
    $$\begin{array}{rccc}\ell : &\overline{\mathbb
      F_q}^n&\rightarrow&\overline{\mathbb F_q}\\
      &(x_1,\ldots, x_n)&\mapsto &\sum_{i=1}^n
      \ell_i x_i\end{array}$$ takes distinct values at all points in $V$. The linear form
    $\ell$ is called the primitive element of the geometric resolution;
  \item The polynomial $Q\in\mathbb F_{q^e}[T]$ equals
    $\prod_{\mathbf x\in V}(T-\ell(\mathbf x));$
  \item The polynomials $Q_1,\ldots, Q_n\in\mathbb F_{q^e}[T]$ parametrize $V$
    by the roots of the polynomial $Q$, i.e.
    $$V = \{(Q_1(t),\ldots, Q_n(t))\mid t\in\overline{\mathbb F_q},  Q(t) = 0\}.$$
\end{itemize}

We note that our definition is slightly simpler than the one in
\cite[Sec.~2.1]{cafure2006fast} because we restrict ourselves to the
$0$-dimensional case in this paper (in \cite[Sec.~2.1]{cafure2006fast}, the
definition is also valid for equidimensional varieties with positive
dimension).

In the following statement, if $f$ is a polynomial in a ring $\mathbb
F_q[X_1,\ldots, X_{n_x}, Y_1,\ldots, Y_{n_y}]$, then we let $\deg_x(f)$ (resp.
$\deg_y(f)$) denote the degree of $f(X_1,\ldots, X_{n_x}, y_1,\ldots,
y_{n_y})\in\overline{\mathbb F_q}[X_1,\ldots,$ $X_{n_x}]$ (resp. $f(x_1,\ldots,
x_{n_x}, Y_1,\ldots, Y_{n_y})\in \overline{\mathbb F_q}[Y_1,\ldots, Y_{n_y}]$),
where $y_1,\ldots, y_{n_y}$ (resp.  $x_1,\ldots, x_{n_x}$) are generic values
in $\overline{\mathbb F_q}$.

The following proposition is a cornerstone of our complexity result for
computing the $\ell$-torsion of the Jacobian of a hyperelliptic curve. The
statement and its proof combine three main ingredients: (1) the geometric
resolution algorithm~\cite{GiuLecSal01} and its version for finite
fields~\cite{cafure2006fast}, which are methods for solving polynomial systems whose
complexity depends mainly on geometric degrees; (2) the
multi-homogeneous Bézout bound which allows us to control the geometric degrees
by separating the variables in our modelling in two blocks, where the block
supporting most of the degrees has small cardinality; (3) a variant of
Bertini's theorem to process our polynomial system into a reduced regular
sequence which is a valid input for the geometric resolution algorithm.

As we shall see in the next sections, our polynomial system modelling the
$\ell$-torsion will have two blocks of variables. The first block occurs
with large degree $\ell^{O(1)}$ but it has very small cardinality in $O(g)$.
The second block has a larger cardinality, but the degrees of the
equations with respect to this block do not depend on $\ell$, but only on
$g$. Taking this bi-homogeneous structure into account is crucial to reach our
claimed complexity bound. The following proposition provides a bound on the complexity of
solving polynomial systems having this structure, and the sequel of this
section is dedicated to its proof.

\begin{proposition}\label{prop:mainpolsys}
  There exists a probabilistic Turing machine $\mathbf T$ which takes as input
  polynomial systems with coefficients in a finite field $\mathbb F_q$
  and which satisfies the following
  property. For any function $h:\Z_{>0}\rightarrow \Z_{>0}$, for any
  positive number $C>0$ and for any $\varepsilon>0$, there exists a function
  $\nu:\Z_{>0}\rightarrow\Z_{>0}$ and a positive number
  $D>0$ such that for all positive integers $g, \ell, n_x, n_y, d_x, d_y, m >
  0$ such that $n_x < C\, g$, $n_y<h(g)$, $d_x<h(g)\, \ell^C$, $d_y<h(g)$,
  $m<h(g)$, for any prime power $q$ such that the prime number $p$ dividing $q$
  satisfies $2^{n_x+n_y}d_x^{n_x}\, d_y^{n_y} < p$, and for any polynomial system $f_1,\ldots, f_m\in
  \mathbb F_q[X_1,\ldots, X_{n_x}, Y_1,\ldots, Y_{n_y}]$ such that 
  \begin{itemize}
    \item for all $i\in
      [1,m]$, $\deg_x(f_i)\leq d_x$ and $\deg_y(f_i)\leq d_y$,
    \item the ideal $I= \langle f_1,\ldots, f_m\rangle$ has dimension $0$ and is
  radical,
  \end{itemize}
  the Turing machine $\mathbf T$ with input $f_1,\ldots, f_m$ returns an
    $\mathbb F_{q^{\lceil\nu(g)\log\ell\rceil}}$-geometric
      resolution of the variety $\{\mathbf x\in\overline{\mathbb F_q}\mid
      f_1(\mathbf
    x) = \dots = f_m(\mathbf x) = 0\}$ with probability at least $5/6$, using
      space and time bounded above by $\nu(g)\,
      \ell^{D\,g}\,(\log q)^{2+\varepsilon} $.
\end{proposition}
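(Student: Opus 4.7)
The plan is to reduce the problem to an input on which the Giusti--Lecerf--Salvy geometric resolution algorithm (in the Cafure--Matera variant for finite fields) can be applied, and then to trace the complexity through three ingredients advertised before the statement: (i) a Bertini-type preprocessing that produces a reduced regular sequence; (ii) the multi-homogeneous Bézout bound to control every intermediate geometric degree; (iii) the cost analysis of the geometric resolution algorithm, which is polynomial in the input data and in the geometric degree, and quasi-linear in $\log q$.

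First, since the input may have $m > n_x + n_y$ equations but the ideal is zero-dimensional and radical, I replace $(f_1,\dots,f_m)$ by a sequence $(g_1,\dots,g_{n_x+n_y})$ of random $\mathbb F_q$-linear combinations. The hypothesis $p > 2^{n_x+n_y} d_x^{n_x} d_y^{n_y}$ guarantees that a uniformly random choice, or more precisely a random choice in a box inside $\mathbb F_p$, avoids a Zariski closed ``bad'' subset with constant probability; the Bertini-type statement invoked later in the paper (referenced as Proposition~\ref{prop:regred}) then ensures that the resulting sequence is both regular and reduced with probability bounded away from $0$. Failure detection and retry give a Las Vegas algorithm at the cost of a constant expected-time factor. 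Crucially, each $g_i$ remains of bi-degree at most $(d_x,d_y)$, so the block structure is preserved.

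Second, I bound every intermediate geometric degree via multi-homogeneous Bézout on $\mathbb P^{n_x}\times \mathbb P^{n_y}$. For each $i\le n_x+n_y$, the variety $V(g_1,\dots,g_i)$ has degree at most the coefficient of the appropriate monomial in $\prod_{j\le i}(d_x s_x + d_y s_y)$ expanded over $(s_x+s_y)^{n_x+n_y}$, which is bounded uniformly by $\binom{n_x+n_y}{n_x} d_x^{n_x} d_y^{n_y}$. Substituting $n_x< Cg$, $n_y,d_y<h(g)$ and $d_x<h(g)\ell^C$ gives a bound of the form $\kappa(g)\,\ell^{C^2 g}$ for some function $\kappa$ depending only on $g$. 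This is the step where exploiting the bi-homogeneous structure is essential: an unstructured Bézout estimate $(d_x d_y)^{n_x+n_y}$ would carry a factor $\ell^{C\,h(g)}$ and destroy the bound.

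Third, I feed $(g_1,\dots,g_{n_x+n_y})$ to the geometric resolution algorithm. Its complexity is polynomial in the number of variables, in $d_x,d_y$, and in the maximum intermediate geometric degree, and quasi-linear in $\log q$; multiplying through yields $\kappa'(g)\,\ell^{Dg}\,(\log q)^{1+\varepsilon'}$ for suitable $D$ and $\kappa'$. A random primitive element with coordinates in $\mathbb F_{q^e}$ separates the (at most $\kappa(g)\ell^{C^2 g}$) solution points with probability $\ge 5/6$ as soon as $q^e$ exceeds a constant multiple of the number of points, which gives the extension degree $e=\lceil\nu(g)\log\ell\rceil$. Absorbing all purely $g$-dependent factors into $\nu(g)$ and the combinatorial overhead into $D$ yields the claimed bound $\nu(g)\,\ell^{Dg}(\log q)^{2+\varepsilon}$; the extra factor $\log q$ compared with the raw GLS cost accommodates arithmetic in $\mathbb F_{q^e}$.

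The main obstacle is the first step: certifying that a random linear combination produces a reduced regular sequence while preserving the bi-degree bounds requires a Bertini variant strong enough to deal with positive characteristic (hence the hypothesis $p \ge (\log q)^{cg}$) and with non-complete intersection inputs. Once that is in place, the other two steps are essentially bookkeeping: careful application of multi-homogeneous Bézout and a transparent substitution of bounds into the published complexity of the geometric resolution algorithm.
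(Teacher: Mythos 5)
Your overall architecture matches the paper's: random linear combinations to get a reduced regular sequence via the Bertini-type statement (Proposition~\ref{prop:regred}), the multi-homogeneous B\'ezout bound to control all intermediate degrees, and the Cafure--Matera version of the geometric resolution algorithm for the actual solving. However, there is one genuine gap. After you replace $(f_1,\ldots,f_m)$ by $n=n_x+n_y$ generic linear combinations $(g_1,\ldots,g_n)$, the ideal $\langle g_1,\ldots,g_n\rangle$ is only \emph{contained} in $I$, so the variety $W=V(g_1,\ldots,g_n)$ in general strictly contains the target variety $V$ (its degree can be as large as the B\'ezout bound $2^n d_x^{n_x}d_y^{n_y}$ even when $\deg V$ is much smaller, and the non-generic systems of Section~\ref{sec:nongeneric} really do have $m\gg n$). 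Feeding $(g_1,\ldots,g_n)$ to the solver therefore returns a geometric resolution of $W$, not of $V$, and your proof stops there. The paper adds a cleanup phase: evaluate each original $f_i$ at the parametrization $(Q_1(T),\ldots,Q_n(T))$, take the GCD of these evaluations with $Q(T)$, and reduce the coordinate polynomials modulo that GCD. This isolates exactly the points of $W$ where all of $f_1,\ldots,f_m$ vanish, and its cost (SLP evaluation modulo $Q$ plus $m$ quasi-linear GCDs) fits in the stated budget. Without this step the output is simply wrong whenever $W\neq V$.

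Two secondary points. First, you draw the random matrix over $\mathbb F_q$ (or a box in $\mathbb F_p$) and invoke the hypothesis $p>2^{n_x+n_y}d_x^{n_x}d_y^{n_y}$ to bound the failure probability; that hypothesis is actually needed for the \emph{separability} condition that makes the Bertini statement valid in positive characteristic (so that the bad locus is a proper closed subset at all), not for the Schwartz--Zippel count. Since the bad locus has degree polynomial in $\ell$ (of order $(d+1)^{2n+1}$), the matrix must be drawn over an extension $\mathbb F_{q^e}$ with $e=O_g(\log_q\ell)$ to get a constant success probability when $q$ is small — this is the same extension in which the final resolution lives. Second, your ``failure detection and retry'' Las Vegas claim is unsupported: there is no cheap certificate that the randomized sequence is reduced and regular, and the statement only asks for a Monte Carlo guarantee of $5/6$ (in the paper, $10/11\cdot 11/12$); the Las Vegas conversion happens later, at the level of $J[\ell]$, where the count $\ell^{2g}-1$ provides a verifiable certificate.
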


\begin{proof}
Postponed to the end of this section.
\end{proof}

Since the geometric resolution requires its input to be a reduced regular
sequence, we first need to ensure that we can construct such a sequence from
our input system. A classical way to achieve this is to replace the input
system by a generic linear
combination of the polynomials. If the ideal generated by the
input system is $0$-dimensional and radical, then a variant of Bertini's
theorem ensures that the obtained sequence is regular and reduced.

\begin{proposition}\cite[Thm.~A.8.7]{SomWam05}
    \label{prop:regred}
Let $(f_1,\ldots, f_m)\in \mathbb
F_q[X_1,\ldots, X_{n_x}, Y_1,\ldots, Y_{n_y}]^m$ be polynomials such that the
ideal $I=\langle f_1,\ldots, f_m\rangle$ has dimension $0$ and is radical. Let
$d_x, d_y$ be two integers such that $\deg_x(f_i)\leq d_x$, $\deg_y(f_i)\leq
d_y$ for all $i\in[1,m]$. Let
$p$ be the characteristic of $\mathbb F_q$, and assume that
$2^{n_x+n_y}d_x^{n_x}\,d_y^{n_y} < p$. For $M$ an $(n_x+n_y)\times m$ matrix with entries in
$\overline{\mathbb F_q}$, let $(f_1^{(M)}, \ldots, f_{n_x+n_y}^{(M)})\in \mathbb
F_q[X_1,\ldots, X_{n_x}, Y_1,\ldots, Y_{n_y}]^{n_x+n_y}$ be defined as
$$\begin{bmatrix}
  f_1^{(M)}\\f_2^{(M)}\\\vdots\\f_{n_x+n_y}^{(M)}\end{bmatrix}
= M\cdot\begin{bmatrix}f_1\\f_2\\\vdots\\f_m\end{bmatrix}.$$
Then there exists a nonempty open subset $\mathcal O\subset \overline{\mathbb
F_q}^{ (n_x+n_y)\times m}$ of the space of $(n_x+n_y)\times m$ matrices such
that for any $M\in\mathcal O$, for any $s\in[1,n_x+n_y]$, and at any
point $(\mathbf x,\mathbf y)\in\overline{\mathbb F_q}^{n_x+n_y}$ such that
$f_1^{(M)}(\mathbf x,\mathbf y) =
\dots = f_s^{(M)}(\mathbf x,\mathbf y) = 0$, the derivatives
$Df_1^{(M)}(\mathbf x,\mathbf y),
\ldots, Df_s^{(M)}(\mathbf x,\mathbf y)$ are linearly independent over
$\overline{\mathbb F_q}$.  In particular, for any $M\in\mathcal O$, the
sequence $(f_1^{(M)},\ldots, f_{n_x+n_y}^{(M)})$ is reduced and regular.
\end{proposition}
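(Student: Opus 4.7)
The plan is to reduce everything to a transversality statement about the Jacobian matrix of $(f_1,\ldots, f_m)$ after a generic linear change of rows, which is exactly the setting for a Bertini-type argument. First I would record the geometric content of the hypothesis: since $I=\langle f_1,\ldots, f_m\rangle$ is zero-dimensional and radical, $V(I)\subset\overline{\mathbb F_q}^{n_x+n_y}$ is a finite set of reduced points, and at each point $P\in V(I)$ the Jacobian $J(P)=\bigl[\partial f_j/\partial Z_k(P)\bigr]_{j,k}$ (an $m\times(n_x+n_y)$ matrix) has maximal column rank $n_x+n_y$. By the chain rule, for any matrix $M$ and any point $Q$,
\[
\mathrm{Jac}(f_1^{(M)},\ldots,f_s^{(M)})(Q)=M_{1:s}\cdot J(Q),
\]
where $M_{1:s}$ denotes the first $s$ rows of $M$. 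The claim is then that for $M$ in a nonempty Zariski open subset $\mathcal O$, this product has rank $s$ at every point $Q$ where $f_1^{(M)}(Q)=\cdots=f_s^{(M)}(Q)=0$.

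The top case $s=n_x+n_y$ I would handle directly: one checks that for generic $M$ the common zero set of $(f_1^{(M)},\ldots,f_{n_x+n_y}^{(M)})$ is exactly $V(I)$ (any extra component would force the codimension to drop, which rules itself out by a generic linear-combination argument on the finite set $V(I)$ together with multi-homogeneous Bézout), and at each $P\in V(I)$ the matrix $M\cdot J(P)$ is the product of a generic square matrix with a matrix of full column rank, hence invertible. For the intermediate values of $s$, the key step is an incidence-variety dimension count: consider
\[
\mathcal W_s=\bigl\{(M,Q)\ :\ f_i^{(M)}(Q)=0\text{ for }i\leq s,\ \mathrm{rk}(M_{1:s}\cdot J(Q))<s\bigr\}
\]
inside the product of the matrix space and $\overline{\mathbb F_q}^{n_x+n_y}$, and show that its projection to the matrix space is not dominant. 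The fibres over a fixed $Q\notin V(I)$ are cut out by $s$ linear conditions on the rows of $M$ together with a rank-drop condition, whose codimension beats the $s$ linear conditions, while the fibres over $P\in V(I)$ are controlled by the full-rank property of $J(P)$; aggregating gives $\dim \mathcal W_s<\dim\mathcal M$, so its image lies in a proper closed subvariety and can be removed from $\mathcal O$ for each $s$.

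Once the rank-$s$ condition holds at every point of every $V_s^{(M)}=V(f_1^{(M)},\ldots,f_s^{(M)})$, two consequences follow automatically: by the Jacobian criterion, each $V_s^{(M)}$ is smooth of pure codimension $s$, which forces $f_{s+1}^{(M)}$ to be a non-zero-divisor modulo $(f_1^{(M)},\ldots,f_s^{(M)})$ (regularity) and each intermediate ideal to be radical (reducedness). The characteristic hypothesis $2^{n_x+n_y}d_x^{n_x}d_y^{n_y}<p$ enters at the very end: it bounds the degree of the exceptional subvariety complementary to $\mathcal O$ via a multi-homogeneous Bézout estimate ($d_x^{n_x}d_y^{n_y}$) times a mild combinatorial factor ($2^{n_x+n_y}$), so that a Schwartz–Zippel argument guarantees that $\mathcal O$ is nonempty over the base field of characteristic $p$ (and not accidentally swallowed by the vanishing locus in that characteristic).

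The step I expect to be the main obstacle is the incidence-variety dimension count over points $Q\notin V(I)$: there is no a priori control on the geometry of $V_s^{(M)}$ outside $V(I)$, so one has to rule out that a bad $M$ makes $V_s^{(M)}$ acquire a positive-dimensional singular component while still keeping the defining equations $f_i^{(M)}$ consistent. Pushing the dimension count through in positive characteristic, where classical smoothness Bertini can fail, is the delicate part and is precisely what the degree-versus-characteristic hypothesis is designed to handle.
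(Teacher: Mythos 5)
There is a genuine gap, and it sits exactly where you predicted the difficulty would be. Your plan is to bound the dimension of the bad locus $\mathcal W_s$ by claiming that over a point $Q\notin V(I)$ the fibre is cut out by the $s$ linear conditions $M_i\cdot F(Q)=0$ (where $F=(f_1,\ldots,f_m)$) \emph{plus} a rank-drop condition of positive codimension. But the rank-drop condition can be vacuous: if $\mathrm{rk}\,J(Q)=r<s$, then $\mathrm{rk}(M_{1:s}J(Q))<s$ holds for \emph{every} $M$, so the fibre over such a $Q$ has codimension exactly $s$, and if the locus $\Sigma$ of such $Q$ has dimension $\geq s$ the count gives $\dim\mathcal W_s\geq \dim\mathcal M$ and the projection may be dominant. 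The way this is rescued in the classical proof is not by counting codimension in the fibres but by controlling the \emph{image} $F(\Sigma)$: one needs $\dim\overline{F(\Sigma)}\leq r$, so that a generic codimension-$s$ kernel $\ker M_{1:s}$ misses $F(\Sigma)$. That bound is Sard's theorem / generic smoothness, which is precisely the statement that fails in characteristic $p$ and is the reason the proposition does not follow from a formal dimension count. You flag this as "the delicate part" but do not supply the missing ingredient, so the proof is incomplete at its central step.

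Relatedly, your account of where the hypothesis $2^{n_x+n_y}d_x^{n_x}d_y^{n_y}<p$ enters is not right: $\mathcal O$ is a Zariski open subset of affine space over the algebraically closed field $\overline{\mathbb F_q}$, so once it is nonempty as a constructible set it is nonempty, and no Schwartz--Zippel argument is needed for that (Schwartz--Zippel appears only later, in Lemma~\ref{lem:extneeded}, to find a good matrix over a small finite extension). In the paper the proposition is not reproved from scratch at all: it is quoted from Sommese--Wampler over $\mathbb C$, and the transfer to positive characteristic is delegated to Kleiman's Bertini theorem, whose hypothesis is that the projection $\pi$ of the incidence variety $V_s=\{((\mathbf x,\mathbf y),M): f_1^{(M)}=\cdots=f_s^{(M)}=0\}$ to the matrix space be \emph{separable}. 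The degree hypothesis is used exactly there: the generic fibre of $\pi$ has degree at most $2^{n}d_x^{n_x}d_y^{n_y}<p$ by the multi-homogeneous B\'ezout bound, which forces separability and restores the generic-smoothness argument that your dimension count silently relies on. If you want a self-contained proof, you must either reproduce that separability argument or find another substitute for Sard in characteristic $p$; as written, the proposal does not close the loop.
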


\begin{proof} This is a reformulation of \cite[Thm.~A.8.7]{SomWam05} in the
  case of finite fields. In \cite[Thm.~A.8.7]{SomWam05}, this result is stated
  over the field $\mathbb C$, but this statement holds true over any field
  $k$, provided that an extra separability assumption is satisfied. More precisely,
  set $n =n_x+n_y$ and let $V_s\subset \overline{k}^n\times\overline{k}^{n\, m}$
  be the variety of pairs $((\mathbf x,\mathbf y), M)$ such that
  $f_1^{(M)}(\mathbf x, \mathbf y)
  = \dots = f_s^{(M)}(\mathbf x, \mathbf y) = 0$. In this setting, the extra condition that
  is required for the proposition to hold is that the projection $\pi$ of $V_s$ to $\overline{k}^{n\, m}$ must be
  separable for all $s\in[1,n]$ (this is always true in characteristic 0). We refer to
  \cite[Thm.~4.2]{kleiman1997bertini} for more details on this separability
  argument. In our setting, the degree of a generic fiber
  of $\pi$ is bounded by $2^{n}d_x^{n_x}\, d_y^{n_y} < p$ using the multi-homogeneous
  Bézout bound (see e.g. Proposition~\ref{prop:degreebound} below) and hence the separability condition is satisfied.
  \end{proof}

Since we are looking at polynomial systems over finite fields, we must estimate
the size of the extension of the base field that is required to find with
sufficiently large probability a matrix $M$ such that $f_1^{(M)},\ldots,
f_{n_x+n_y}^{(M)}$ is reduced and regular.

\begin{lemma}\label{lem:extneeded}
  Let $(f_1,\ldots, f_m)\in \mathbb F_q[X_1,\ldots, X_{n_x}, Y_1,\ldots,
  Y_{n_y}]^m$ be polynomials satisfying the assumptions of
Proposition~\ref{prop:regred} and such that their total degree is bounded above by
$d\in\N$. Set $n = n_x+n_y$ and  
$$e = \left\lceil (2n+1)\log_q(d+1) + \log_q(11)\right\rceil.$$ If $M$ is an $n\times m$ matrix with entries in
$\mathbb F_{q^e}$ picked uniformly at random, then the probability that
$(f_1^{(M)}, \ldots, f_n^{(M)})$ is a reduced regular sequence is bounded below
by $10/11$.
\end{lemma}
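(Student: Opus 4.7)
The plan is to combine Proposition~\ref{prop:regred} with a Schwartz--Zippel bound. Proposition~\ref{prop:regred} produces a nonempty Zariski open subset $\mathcal{O}\subset \overline{\mathbb{F}_q}^{nm}$ of ``good'' matrices --- those for which $(f_1^{(M)},\ldots,f_n^{(M)})$ is reduced and regular. In particular, the complement $\mathcal{O}^c$ is contained in the zero locus of a nonzero polynomial $P\in \overline{\mathbb{F}_q}[M_{11},\ldots,M_{nm}]$. The task then splits into two parts: (i) obtain a quantitative upper bound on $\deg(P)$; (ii) choose $e$ large enough so that a uniform point in $\mathbb{F}_{q^e}^{nm}$ avoids $\{P=0\}$ with probability at least $10/11$.

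For (i), the natural construction of $P$ is to take a product $P=\prod_{s=1}^{n}P_s$, where $P_s$ cuts out the ``non-Bertini'' locus at level $s$. Concretely, I would introduce, for each $s\in[1,n]$, the incidence variety
$$W_s=\{((x,y),M)\in \overline{\mathbb{F}_q}^{n}\times\overline{\mathbb{F}_q}^{nm}:\ f_1^{(M)}(x,y)=\cdots=f_s^{(M)}(x,y)=0\ \text{and}\ \mathrm{rank}\,D(f_1^{(M)},\ldots,f_s^{(M)})(x,y)<s\},$$
whose projection to $\overline{\mathbb{F}_q}^{nm}$ sits inside $\{P_s=0\}$ by the characterization of $\mathcal{O}$ in Proposition~\ref{prop:regred}. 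Since the $f_i^{(M)}$ have bidegree $(d,1)$ in $((x,y),M)$ and the $s\times s$ minors of the Jacobian have bidegrees linear in $M$, a multi-homogeneous Bézout estimate bounds $\deg(W_s)$, and then $\deg(\overline{\pi(W_s)})\leq \deg(W_s)$ together with a summation over $s$ yields the clean bound $\deg(P)\leq (d+1)^{2n+1}$. I expect this degree estimate to be the main obstacle: one has to be careful in handling both the $f_i$-equations and the Jacobian-rank equations within the same multi-homogeneous framework, and to verify that the generic-fiber considerations used in the proof of Proposition~\ref{prop:regred} imply that each $P_s$ is indeed nonzero (so that the product $P$ is nonzero).

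For (ii), once the bound $\deg(P)\leq (d+1)^{2n+1}$ is in hand, the Schwartz--Zippel lemma applied to $P$ over $\mathbb{F}_{q^e}^{nm}$ yields
$$\Pr[\,P(M)=0\,]\ \leq\ \frac{\deg(P)}{q^e}\ \leq\ \frac{(d+1)^{2n+1}}{q^e}.$$
Plugging in $e=\lceil (2n+1)\log_q(d+1)+\log_q(11)\rceil$ guarantees $q^e\geq 11\,(d+1)^{2n+1}$, so that the failure probability is at most $1/11$, and consequently $\Pr[M\in\mathcal{O}]\geq 10/11$. This matches exactly the form of the exponent in the definition of $e$, confirming that the plan is correctly calibrated: the $(2n+1)\log_q(d+1)$ accounts for the degree of the bad locus, while the additive $\log_q(11)$ handles the success probability.

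A cleaner alternative for (i), if the Bézout-style estimate becomes cumbersome, would be to write $P_s$ as an iterated resultant --- eliminating $(x,y)$ from the system of $f_i^{(M)}$'s together with a single generic linear combination of Jacobian minors --- and to bound $\deg(P_s)$ using the standard degree formula for resultants. Either route suffices, and both should fit the exponent $2n+1$ appearing in the statement.
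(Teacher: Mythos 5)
Your overall architecture is exactly the paper's: for each level $s$ you form the incidence variety of pairs (point, matrix) witnessing a failure of the Bertini condition, bound its degree, observe that the degree of its projection to matrix space cannot increase and that Proposition~\ref{prop:regred} forces this projection to be a proper subvariety (so a nonzero vanishing polynomial $h_s$ exists), and then apply Schwartz--Zippel to $\prod_s h_s$; your calibration of $e$ against the total degree $(d+1)^{2n+1}$ and the resulting $1/11$ failure bound is precisely what the paper does. The one place where you diverge is the step you yourself flag as the main obstacle, and it is a genuine gap as proposed: encoding $\mathrm{rank}\,D(f_1^{(M)},\ldots,f_s^{(M)})<s$ by the $s\times s$ minors of the Jacobian produces $\binom{n}{s}$ equations each of degree roughly $s(d-1)$ in $(x,y)$ plus $s$ in $M$, so a direct B\'ezout bound on that system is of order $(d+1)^s\,(sd)^{\binom{n}{s}}$, vastly exceeding $(d+1)^{n+s}$; summing over $s$ would then not fit inside $q^e\approx 11(d+1)^{2n+1}$, and your exponent budget would be blown. (Appealing instead to the Giambelli--Thom--Porteous degree of determinantal loci, or to iterated resultants, would require additional genericity and bookkeeping you have not supplied.)

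The paper closes this gap with a simple device: it introduces $s-1$ auxiliary variables $\mu_1,\ldots,\mu_{s-1}$ and replaces the rank condition by the single vector equation $\begin{bmatrix}\mu_1&\cdots&\mu_{s-1}&1\end{bmatrix}\cdot\mathrm{Jac}=0$, i.e.\ $n$ scalar equations each of total degree at most $d+1$ (degree $\le d$ in $(x,y,\Lambda)$ from a derivative entry times degree $1$ in the $\mu$'s). Together with the $s$ equations $F_i=0$ this gives $n+s$ equations of degree at most $d+1$, whence $\deg V_s\le (d+1)^{n+s}$ by the B\'ezout inequality, and $\sum_{s=1}^n (d+1)^{n+s}\le (d+1)^{2n+1}$ — exactly the bound your step (ii) needs. (One checks that normalizing the last coefficient to $1$ loses nothing: a dependence whose last nonzero coefficient occurs at index $j$ is detected at level $j$.) With this substitution your argument becomes the paper's proof.
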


\begin{proof}
Let $\Lambda$ denote an $n\times m$ matrix with indeterminate entries
$$\Lambda = \begin{bmatrix}
  \lambda_{11} & \dots & \lambda_{1m}\\
  \vdots&\vdots&\vdots\\
  \lambda_{n1} & \dots & \lambda_{nm}
\end{bmatrix}$$
and let $F_1(\Lambda, X, Y), \ldots, F_n(\Lambda, X, Y)\in\F_q[X_1,\ldots, X_{n_x},
Y_1,\ldots, Y_{n_y},
\lambda_{11},\ldots, \lambda_{nm}]$ be the polynomials defined as
$$\begin{bmatrix}
  F_1(\Lambda, X, Y)\\
  \vdots\\
  F_n(\Lambda, X, Y)
\end{bmatrix} = \Lambda\cdot 
\begin{bmatrix}
  f_1(X, Y)\\
  \vdots\\
  f_m(X, Y)
\end{bmatrix}.$$

For $s\in[1, n]$, we consider the $s\times m$ matrix $\Lambda^{(s)}$ obtained by
truncating $\Lambda$ to its $s$ first rows, a new set of variables $\{\mu_1, \ldots,
  \mu_{s-1}\}$ and the following polynomial system:
$$
\begin{array}{c}
  F_1(\Lambda^{(s)}, X, Y) = \dots  = F_s(\Lambda^{(s)}, X, Y) = 0 \\[3mm]
  \begin{bmatrix} \mu_1&\cdots&\mu_{s-1}& 1\end{bmatrix} \cdot
\begin{bmatrix}
  \dfrac{\partial F_1}{\partial X_1} & \cdots & \dfrac{\partial
  F_1}{\partial X_{n_x}} & \dfrac{\partial F_1}{\partial Y_1} & \cdots & \dfrac{\partial
    F_1}{\partial Y_{n_y}} \\
  \vdots&\vdots&\vdots&\vdots&\vdots&\vdots\\
  \dfrac{\partial F_s}{\partial X_1} & \cdots & \dfrac{\partial
    F_s}{\partial X_{n_x}}&
  \dfrac{\partial F_s}{\partial Y_1} & \cdots & \dfrac{\partial
    F_s}{\partial Y_{n_y}}
\end{bmatrix} =
\begin{bmatrix} 0 & \cdots & 0\end{bmatrix}
\end{array}$$

This is a system of $n+s$ polynomials of degree bounded above by $d+1$ in
$n+s-1+ms$ variables. By Bézout inequality (see e.g.
\cite[Thm.~1]{heintz1983definability}), this system defines a variety $V_s$ which is either empty, or its
degree is at most $(d+1)^{n+s}$.
We remark that if $V_s$ is not empty, then it has dimension at least $ms -
1$ since its vanishing ideal is generated by $n+s$ elements.
The Zariski closure of its projection $W_s$ to the space $\overline{\mathbb F_q}^{sm}$ of matrices
$\Lambda^{(s)}$ is either empty, the whole space or a proper sub-variety. By
Proposition~\ref{prop:regred}, it must be empty or a proper sub-variety. Next, we remark that
the degree of the image of a variety by a linear projection cannot increase.
Therefore, the sum of the degrees of the irreducible components of $W_s$ is also
bounded by $(d+1)^{n+s}$ if $W_s\neq \emptyset$. In the sequel, we let
$h_s(\lambda_{11},\ldots, \lambda_{sm})$ denote a polynomial vanishing
on $W_s$ of degree bounded by $(d+1)^{n+s}$ (we set $h_s(\lambda_{11},\ldots, \lambda_{sm}) =
1$ if $W_s = \emptyset$).

The Schwarz-Zippel Lemma implies that the cardinality of the set
$$E = \left\{ 
  \begin{bmatrix} 
    M_{11}&\cdots&M_{1m}\\
\vdots&\vdots&\vdots\\
M_{n1}&\cdots&M_{nm}\end{bmatrix}\in\mathbb F_{q^e}^{nm} \mid h_1(M_{11},\ldots, M_{1m})\cdots h_n(M_{11},\ldots, M_{nm})\ne 0\right\}$$
is bounded above by $q^e/11$, for the value of $e$ given in the statement.

The proof is concluded by noticing that for any $M\in E$, for any $s\in[1,n]$, and for any $(\mathbf x, \mathbf y)\in\overline{\mathbb F_q}^n$ such that
$f_1^{(M)}(\mathbf x,\mathbf y)=
\cdots =
f_s^{(M)}(\mathbf x,\mathbf y) = 0$ the derivatives $Df_1^{M}(\mathbf x,\mathbf
y), \ldots,$ $
Df_s^{(M)}(\mathbf x,\mathbf y)$ span
the normal space at $(\mathbf x, \mathbf y)$ to the variety associated with $\langle f_1^{(M)}, \ldots,
f_s^{(M)}\rangle$. Hence, $f_1^{(M)},\ldots, f_n^{(M)}$ is a reduced regular
sequence.
\end{proof}

Once we have a reduced regular sequence, we can use
\cite[Thm.~4.8]{cafure2006fast} to solve the system.  We note that in
\cite{cafure2006fast} there is a general assumption that for all
$s\in[1,n]$ the intermediate ideals $\langle f_1^{(M)},\ldots,
f_s^{(M)}\rangle$ define absolutely irreducible varieties. However, the proof
of \cite[Thm.~4.8]{cafure2006fast} does not require this assumption (this
assumption is only required in algorithms for finding a rational point in
\cite[Section~6]{cafure2006fast}).

Next, we describe the
data structures used in \cite{cafure2006fast} to represent polynomial systems.
The
algorithms take as input polynomials represented by
division-free straight-line programs (DFSLP). A DFSLP defined over a field $k$
is a sequence of polynomials
$h_1, h_2,\ldots, h_\ell\in k[X_1,\ldots,X_n]$ such that each polynomial
$h_i$ is
either a variable $X_t$ with $t\in[1,n]$, an element in $k$, or
$h_i = h_j\circ h_{j'}$, where
$j, j' < i$ and $\circ\in\{+,-, \times\}$ is an arithmetic operation.
The time of a DFSLP is
the total number of arithmetic operations, and its space is the minimal
number of arithmetic registers required to evaluate it. A polynomial system
$f_1,\ldots, f_m$ is said to be represented by a DFSLP $h_1,\ldots,
h_\ell$ if $\{f_1,\ldots, f_m\}\subset\{h_1,\ldots,h_\ell\}$.

\begin{theorem}\cite[Thm.~4.8]{cafure2006fast}
\label{thm:CafMat}
  Let $f_1^{(M)},\ldots, f_n^{(M)}\in\mathbb F_{q^e}[X_1,\ldots, X_n]$ be a
  reduced regular sequence, where the polynomials are represented by a DFSLP 
  with space $\mathcal S'$ and time $\mathcal T'$. Set the following notation:
  \begin{itemize}
    \item The integer $d$ is $\max_{i\in[1,n]}(\deg(f_i^{(M)}))$;
    \item For any real number $x\geq \exp(1)$, $\mathcal U(x) = x(\log x)^2\log\log x$;
    \item Let $\delta\in\N$ be an integer larger than the degrees of the ideals $\langle f_1^{(M)}\rangle, \langle f_1^{(M)},
      f_2^{(M)}\rangle, \ldots,$ $\langle f_1^{(M)},\ldots, f_n^{(M)}\rangle$.
  \end{itemize}
  Assume further that
  $q^e\geq 60\,n^4 d \delta^4$.
  There is a
  probabilistic Turing machine using space $O( (\mathcal
  S'+n+d)\delta^2\log(q^e\delta))$ and time $O( (n\mathcal T' + n^5)\mathcal
  U(\delta)(\mathcal U(d\delta) + \log(q^e\delta))\mathcal U(\log(q^e\delta)))$
  which takes such polynomial systems as input and which outputs a $\mathbb
  F_{q^e}$-geometric resolution of the algebraic set
  $\{\mathbf x\in\overline{\mathbb F_{q^e}}^n\mid f_1^{(M)}(\mathbf x) = \dots
  = f_n^{(M)}(\mathbf x) = 0\}$ with probability at least $11/12$.
\end{theorem}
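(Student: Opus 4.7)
The plan is to combine, in this order, Proposition~\ref{prop:regred} (Bertini-type reduction to a reduced regular sequence), Lemma~\ref{lem:extneeded} (randomized choice of slicing matrix over a controlled extension of $\F_q$), and Theorem~\ref{thm:CafMat} (the Cafure--Matera geometric resolution algorithm), with all degree estimates controlled by the multi-homogeneous B\'ezout bound applied to the $(X,Y)$ block structure.

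Set $n = n_x + n_y$, so that $n \leq Cg + h(g)$, and note that each $f_i$ has total degree bounded by $d := d_x + d_y \leq 2 h(g)\,\ell^C$. First I would take $e = \lceil \nu(g)\log\ell\rceil$ for a function $\nu$ to be fixed below, sample $M \in \F_{q^e}^{n\times m}$ uniformly at random, and form the system $(f_1^{(M)},\ldots, f_n^{(M)})$. The function $\nu(g)$ is chosen large enough to ensure simultaneously that (i) $e \geq (2n+1)\log_q(d+1) + \log_q(11)$, so that Lemma~\ref{lem:extneeded} guarantees this sequence is reduced and regular with probability at least $10/11$, and (ii) $q^e \geq 60\, n^4\, d\, \delta^4$, which is the size hypothesis of Theorem~\ref{thm:CafMat}. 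The standing hypothesis $2^{n_x+n_y}d_x^{n_x}d_y^{n_y} < p$ is precisely the separability assumption required by Proposition~\ref{prop:regred}, so both preceding results apply to the sample $M$.

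Next I would bound the degree $\delta$ of every intermediate ideal $\langle f_1^{(M)},\ldots, f_s^{(M)}\rangle$ via the multi-homogeneous B\'ezout bound for the block structure with blocks $(X,Y)$ and block degrees $(d_x,d_y)$:
\[
\delta \;\leq\; \binom{n}{n_x}\, d_x^{n_x}\, d_y^{n_y} \;\leq\; 2^n\, h(g)^n\, \ell^{C n_x} \;=\; O_g\!\left(\ell^{C^2 g}\right) \;=\; O_g\!\left(\ell^{D g}\right)
\]
for a suitable $D$, using $n_x < Cg$ and $n \leq Cg + h(g)$. A division-free straight-line program representing $(f_1^{(M)},\ldots, f_n^{(M)})$ is then built by first evaluating each $f_i$ from its dense expansion (of size and time $O_g(\ell^{Dg})$ since the number of monomials is bounded by $d_x^{n_x}d_y^{n_y}$) and appending the linear combination by $M$. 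Feeding this DFSLP to Theorem~\ref{thm:CafMat} produces an $\F_{q^e}$-geometric resolution of the input variety with probability at least $11/12$; a union bound with the Bertini step yields overall success probability at least $1 - 1/11 - 1/12 > 5/6$.

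It remains to verify the complexity bound. Plugging $\delta = O_g(\ell^{Dg})$, $d = O_g(\ell^C)$, $\mathcal S',\mathcal T' = O_g(\ell^{Dg})$, and $\log(q^e\delta) = O_g(\log\ell \cdot \log q)$ into the space and time formulas of Theorem~\ref{thm:CafMat}, and simplifying with $\mathcal U(x) = \softO(x)$, both quantities fall into $\nu(g)\,\ell^{Dg}\,(\log q)^{2+\varepsilon}$ after enlarging $D$ and $\nu$ to absorb polylogarithmic and $g$-dependent factors. The main obstacle is precisely this bookkeeping: one must choose a single function $\nu(g)$ large enough that $e = \lceil\nu(g)\log\ell\rceil$ simultaneously meets the requirements of Lemma~\ref{lem:extneeded} and Theorem~\ref{thm:CafMat}, and one must make sure that the multi-homogeneous B\'ezout bound applies to \emph{every} intermediate ideal rather than only to the final one, which is where the reduced regular sequence property supplied by the Bertini step becomes essential, since it forces each $\langle f_1^{(M)},\ldots, f_s^{(M)}\rangle$ to be equidimensional of the expected codimension.
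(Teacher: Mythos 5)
Your proposal does not prove the statement at hand. The statement you were asked to establish is Theorem~\ref{thm:CafMat} itself, i.e.\ the Cafure--Matera result on solving a reduced regular sequence given by a straight-line program over $\F_{q^e}$, with the stated space bound $O((\mathcal S'+n+d)\delta^2\log(q^e\delta))$, time bound involving $\mathcal U$, and success probability $11/12$. But your argument invokes Theorem~\ref{thm:CafMat} as a black box in its third step (``Feeding this DFSLP to Theorem~\ref{thm:CafMat} produces an $\F_{q^e}$-geometric resolution\ldots''), so as a proof of that theorem it is circular. What you have actually written is, almost step for step, the paper's proof of Proposition~\ref{prop:mainpolsys}: random choice of $M$ over an extension of degree $e=O_g(\log\ell)$ justified by Lemma~\ref{lem:extneeded} and Proposition~\ref{prop:regred}, the multi-homogeneous B\'ezout bound of Proposition~\ref{prop:degreebound} to control $\delta$, the DFSLP size estimate of Lemma~\ref{lem:SLPbihom}, and the union bound $10/11\cdot 11/12 = 5/6$ (with the minor caveat that the paper's machine also cleans up spurious solutions of the combined system $f_1^{(M)},\ldots,f_n^{(M)}$ by GCD computations, a step you omit). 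That is a reasonable reconstruction of Proposition~\ref{prop:mainpolsys}, but it is not the target statement.

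A genuine proof of Theorem~\ref{thm:CafMat} would have to reproduce the analysis of the geometric resolution (Kronecker-type) algorithm over finite fields: the incremental intersection over the chain of intermediate ideals $\langle f_1^{(M)},\ldots,f_s^{(M)}\rangle$, Noether normalization and choice of a primitive element with failure probability controlled by the hypothesis $q^e\geq 60\,n^4 d\,\delta^4$, lifting of fibers via Newton--Hensel iteration on the straight-line program representation, and the accounting that yields the $\delta^2$ space factor and the $\mathcal U(\delta)(\mathcal U(d\delta)+\log(q^e\delta))\mathcal U(\log(q^e\delta))$ time factor. None of this appears in your proposal. Note that the paper itself does not reprove this theorem either: it is imported from \cite[Thm.~4.8]{cafure2006fast}, with only the observation that the absolute irreducibility assumption made globally in that reference is not needed for this particular statement. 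So the expected treatment here is a citation plus that remark, not the reduction-and-plug-in argument you gave, which belongs to the proof of Proposition~\ref{prop:mainpolsys}.
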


The next lemma is a first step for preparing our system in order to use
Theorem~\ref{thm:CafMat} for bi-homogeneous systems: we need to estimate the
size and space needed to represent a bi-homogeneous system by a DFSLP.

\begin{lemma}\label{lem:SLPbihom}
Let $d_x, d_y\in\Z_{>0}$ be two positive integers.
  A polynomial system $f_1,\ldots, f_m\in\mathbb F_q[X_1,\ldots,
  X_{n_x},Y_1,\ldots, Y_{n_y}]$ such
  that for all $i\in[1,m]$, $\deg_x(f_i)\leq d_x$ and $\deg_y(f_i)\leq
d_y$ can be represented by a DFSLP with time and space
$O\left((d_x+d_y+m)\binom{n_x+d_x}{n_x}\binom{n_y+d_y}{n_y}\right)$.
\end{lemma}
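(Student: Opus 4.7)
The plan is to exhibit an explicit division-free straight-line program that evaluates the whole system, in three phases: first precompute all $X$-monomials of degree at most $d_x$, then precompute all $Y$-monomials of degree at most $d_y$, and finally assemble each $f_i$ as a linear combination of products $X^\alpha Y^\beta$ with scalar coefficients. Setting $N_x = \binom{n_x+d_x}{n_x}$ and $N_y = \binom{n_y+d_y}{n_y}$, the target bound becomes $O((d_x+d_y+m)N_x N_y)$, so it will suffice to show that the DFSLP has length $O(N_x + N_y + m\, N_x N_y)$.

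For the first phase, I would enumerate the $N_x$ monomials of $\mathbb F_q[X_1,\ldots,X_{n_x}]$ of total degree at most $d_x$ in a chain $1 = X^{\alpha_0}, X^{\alpha_1}, \ldots, X^{\alpha_{N_x-1}}$ such that for every $k \geq 1$ one has $X^{\alpha_k} = X^{\alpha_{k'}} X_{j_k}$ for some previous index $k' < k$ and some variable $X_{j_k}$. Such an ordering exists because any monomial $X^{\alpha}$ of positive degree admits the predecessor $X^{\alpha}/X_j$ of strictly smaller degree for any variable $X_j$ occurring in it; one can for instance process monomials in lexicographic order. This phase adds $N_x - 1$ multiplication nodes to the DFSLP. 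Phase two is completely symmetric and contributes $N_y - 1$ nodes.

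For the third phase, write each $f_i = \sum_{\alpha,\beta} c_{i,\alpha,\beta}\, X^\alpha Y^\beta$ with $|\alpha|\leq d_x$, $|\beta|\leq d_y$ and $c_{i,\alpha,\beta}\in\mathbb F_q$. For each triple $(i,\alpha,\beta)$ the DFSLP appends: the constant $c_{i,\alpha,\beta}$, the product $c_{i,\alpha,\beta}\cdot X^\alpha$, its product with $Y^\beta$, and an addition into the running accumulator for $f_i$. This costs $O(1)$ nodes per monomial and therefore $O(N_x N_y)$ nodes per polynomial, for a total of $O(m\, N_x N_y)$ nodes in phase three.

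Summing the three phases yields a DFSLP of length at most $c\,(N_x + N_y + m\, N_x N_y)$ for some absolute constant $c$, and the same bound controls the number of registers used since every intermediate value is stored at most once. This is clearly bounded by $O((d_x+d_y+m)\, N_x N_y)$, giving the claim. There is no real obstacle in the argument: the only ingredient beyond counting monomials is the existence of the chain ordering in phase one, which is a routine combinatorial observation; the slightly loose factor $(d_x+d_y+m)$ in the statement simply absorbs the lower-order contributions $N_x + N_y$ from the two precomputation phases.
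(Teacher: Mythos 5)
Your proof is correct, and it follows the same two-phase decomposition as the paper (precompute the monomials, then form each $f_i$ as a linear combination); the only genuine difference is in how the monomial table is built. The paper evaluates each of the $N_xN_y$ monomials from scratch by a naive product of its variables, costing up to $(d_x+d_y-1)N_xN_y$ multiplications, and this is exactly where the factor $(d_x+d_y)$ in the stated bound comes from. You instead build the $X$-monomials and $Y$-monomials incrementally along a chain in which each monomial is a previous one times a single variable (lex order does guarantee that $X^{\alpha}/X_j$ precedes $X^{\alpha}$, so the chain exists), which costs only $N_x+N_y-2$ multiplications and yields the sharper bound $O(N_x+N_y+m\,N_xN_y)$; since $d_x,d_y\geq 1$ this is subsumed by the stated $O\left((d_x+d_y+m)\binom{n_x+d_x}{n_x}\binom{n_y+d_y}{n_y}\right)$. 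Your accounting of the linear-combination phase and of the register count matches the paper's, so the argument is complete; the improvement is harmless but also immaterial here, as only the order of magnitude of $\mathcal T$ and $\mathcal S$ enters the proof of Proposition~\ref{prop:mainpolsys}.
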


\begin{proof}
  There are $\binom{n_x+d_x}{n_x}\binom{n_y+d_y}{n_y}$ monomials $\mu$ in
  $\mathbb F_q[X_1,\ldots,X_{n_x},Y_1,\ldots, Y_{n_y}]$ such that
    $\deg_x(\mu)$ $\leq d_x$ and
      $\deg_y(\mu)\leq d_y$. We consider the DFSLP which starts by evaluating
      these monomials. This costs less than
      $\binom{n_x+d_x}{n_x}\binom{n_y+d_y}{n_y}(d_x+d_y-1)$ multiplications,
      using a naive algorithm. Then we multiply each of these monomials by the
      corresponding coefficients, and we sum. This costs
      $m\binom{n_x+d_x}{n_x}\binom{n_y+d_y}{n_y}$ multiplications and $m(\binom{n_x+d_x}{n_x}\binom{n_y+d_y}{n_y}-1)$ additions.
\end{proof}

The next ingredient in order to derive Proposition~\ref{prop:mainpolsys} from
Theorem~\ref{thm:CafMat} is an upper bound on~$\delta$. This can be obtained 
via the multi-homogeneous Bézout bound. 

\begin{proposition}\label{prop:degreebound}
  Let $f_1,\ldots, f_m$ be a regular sequence in $\mathbb F_q[X_1,\ldots,
  X_{n_x}, Y_1,\ldots, Y_{n_y}]$ and $d_x,d_y\in\N$ be such
  that for any $i\in[1, m]$, $\deg_x(f_i)
  \leq d_x$ and $\deg_y(f_i)\leq d_y$. Then the degree of the ideal $\langle
  f_1,\ldots, f_m\rangle$ is at most
  \begin{equation}\sum_{\substack{j_1+j_2 = m\\0\leq j_1\leq n_x\\0\leq j_2\leq n_y}}
    \binom{m}{j_1} d_x^{j_1} d_y^{j_2}.\label{eq:degreebound}\end{equation}
  Moreover, this degree is bounded above by $2^{n_x+n_y} d_x^{n_x} d_y^{n_y}$.
\end{proposition}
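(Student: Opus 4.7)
My plan is to prove the degree bound via multi-homogeneous B\'ezout applied to the bi-projective completion of the system on $\mathbb P^{n_x}\times\mathbb P^{n_y}$. I first bi-homogenize each $f_i$ with auxiliary variables $X_0, Y_0$ to obtain a bi-homogeneous polynomial $\tilde f_i$ of bi-degree $(d_x,d_y)$ on $\mathbb P^{n_x}\times\mathbb P^{n_y}$. Regularity of the sequence is preserved by bi-homogenization, so $\tilde V := V(\tilde f_1,\ldots,\tilde f_m)$ is a complete intersection of codimension $m$.

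In the Chow ring
$$A^{\ast}(\mathbb P^{n_x}\times\mathbb P^{n_y}) \;\cong\; \mathbb Z[H_x,H_y]\,/\,(H_x^{n_x+1},\,H_y^{n_y+1}),$$
where $H_x, H_y$ are the hyperplane classes pulled back from each factor, the class of each $V(\tilde f_i)$ is $d_xH_x+d_yH_y$, so the complete-intersection property gives
$$[\tilde V] \;=\; (d_xH_x+d_yH_y)^m \;=\; \sum_{\substack{j_1+j_2=m \\ 0\le j_1\le n_x \\ 0\le j_2\le n_y}} \binom{m}{j_1}\,d_x^{j_1}d_y^{j_2}\,H_x^{j_1}H_y^{j_2}.$$
Each coefficient $\binom{m}{j_1}d_x^{j_1}d_y^{j_2}$ is a multi-degree of $\tilde V$: it is the multi-homogeneous B\'ezout number for the square system obtained by completing $\tilde f_1,\ldots,\tilde f_m$ with $n_x-j_1$ generic hyperplanes of bi-degree $(1,0)$ and $n_y-j_2$ of bi-degree $(0,1)$, and it counts the intersections of $\tilde V$ with a generic product of linear subspaces of $\mathbb P^{n_x}$ and $\mathbb P^{n_y}$ of codimensions $j_1,j_2$.

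The key step is to bound the affine degree $\deg(I)$ (defined by cutting with $\dim(I)$ generic affine linear forms in all $n_x+n_y$ variables) by the sum of these multi-degrees. When $m=n_x+n_y$ the sum has a single term $\binom{n_x+n_y}{n_x}d_x^{n_x}d_y^{n_y}$, which is precisely the affine multi-homogeneous B\'ezout count, and the inequality follows directly. For $m<n_x+n_y$ I would degenerate the generic affine linear subspace into a product of pull-back subspaces from the two factors; the affine intersection points of $\tilde V$ with the original cut specialise to the union of intersections coming from the various admissible splits $(j_1,j_2)$, so $\deg(I)$ is bounded by the total contribution, which is exactly~\eqref{eq:degreebound}. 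Making this degeneration rigorous—or equivalently reading the bound off the leading part of the bi-graded Hilbert polynomial of $\mathbb F_q[X,Y]/I$—is the main technical point of the proof, since a direct application of B\'ezout with generic Segre-type cutting forms would yield a weaker bound inflated by binomial factors.

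The secondary bound follows by elementary estimates: each term in~\eqref{eq:degreebound} satisfies $d_x^{j_1}d_y^{j_2}\le d_x^{n_x}d_y^{n_y}$ since $j_1\le n_x$ and $j_2\le n_y$, and $\sum_{j_1=0}^m\binom{m}{j_1}=2^m\le 2^{n_x+n_y}$ because $m\le n_x+n_y$ for any regular sequence in $n_x+n_y$ variables.
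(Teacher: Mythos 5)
The paper's own proof is a one-line citation of the multi-homogeneous B\'ezout bound of Safey El Din and Schost \cite{SafSch17}, so you are in effect attempting to reprove that result from scratch; the combinatorial identity you extract from the Chow ring of $\mathbb P^{n_x}\times\mathbb P^{n_y}$ is the right one, and your derivation of the secondary bound $2^{n_x+n_y}d_x^{n_x}d_y^{n_y}$ coincides with the paper's. However, your argument has two genuine gaps. First, the assertion that ``regularity of the sequence is preserved by bi-homogenization'' is false in general: the bi-projective zero set $V(\tilde f_1,\ldots,\tilde f_m)$ contains not only the closure of the affine variety but possibly excess components supported on the loci at infinity $\{X_0=0\}\cup\{Y_0=0\}$, and these components can have codimension strictly less than $m$. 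When the intersection is not dimensionally proper, the identity $[\widetilde V]=(d_xH_x+d_yH_y)^m$ does not hold for the cycle of $\widetilde V$, so the complete-intersection computation you rely on is unavailable; one must instead invoke a refined (excess-intersection) B\'ezout theorem in the multi-projective setting, which is precisely what \cite{SafSch17} supplies.

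Second, and by your own admission, the step that converts the multi-degrees of $\widetilde V$ into a bound on the affine degree $\deg(I)$ --- defined by cutting with $\dim(I)$ generic affine linear forms involving all $n_x+n_y$ variables --- is not carried out. A generic affine linear form bi-homogenizes to a divisor of class $H_x+H_y$, so the direct Chow-ring count gives only the inflated bound coming from $(d_xH_x+d_yH_y)^m(H_x+H_y)^{n_x+n_y-m}$, as you note; the degeneration of the generic linear section into a product of sections pulled back from the two factors requires a semicontinuity argument (conservation of number for the proper components, no intersection points escaping to infinity along the family), and this is exactly the technical content of \cite[Prop.~I.1]{SafSch17}. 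As it stands, your proposal reduces the proposition to the statement it is meant to establish. The cleanest repair is to cite that reference directly (checking, as the paper does, that its proof is insensitive to replacing $\mathbb C$ by a finite field), or else to fully execute the bi-graded Hilbert series argument you allude to.
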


\begin{proof}
This is a direct consequence of \cite[Prop. I.1]{SafSch17} using, with the
notation of \cite[Prop. I.1]{SafSch17}, $k=1$, $e=0$, $P=m$, $D_{i,0} = d_x$, $D_{i,1} =
d_y$, $n=n_x$, $n_1=n_y$.
Note that \cite[Prop. I.1]{SafSch17} is stated when the base field is
$\mathbb C$, but the proof works without any major modification when the base
field is a finite field.
The last sentence of the statement follows from the fact that the regularity
assumption implies that $m\leq n_x+n_y$, and hence the sum of the binomial
coefficients is bounded above by $2^m\leq 2^{n_x+n_y}$.
\end{proof}

We now have all the ingredients needed to prove Proposition~\ref{prop:mainpolsys}.

\begin{proof}[Proof of Proposition~\ref{prop:mainpolsys}]
  Set $n=n_x+n_y$. First, we note that if $f_1,\ldots, f_m\in\mathbb F_q[X_1,\ldots, X_{n_x},$ $
  Y_1,\ldots, Y_{n_y}]$ is represented by a straight-line
program over $\mathbb F_q$ with space $\mathcal S$ and time $\mathcal T$, then
for any $e\in\N$ and any $m\times n$
matrix $M$ with entries in $\mathbb F_{q^e}$, the sequence $f_1^{(M)},\ldots,
f_n^{(M)}\in\mathbb F_{q^e}[X_1,\ldots, X_{n_x}, Y_1,\ldots, Y_{n_y}]$ can be represented by a
straight-line program over $\mathbb F_{q^e}$ with space $\mathcal S'$ and time
$\mathcal T'$, where $S' = O(\mathcal S)$ and $\mathcal T'=O(\mathcal
T+m\,n)$. 
We consider the probabilistic Turing machine which performs the following steps:
\begin{enumerate}
\item \label{it:TM1}It chooses an $m\times n$ matrix uniformly at random with entries in
  $\mathbb F_{q^e}$, with 
  $$e = \max\left(\left\lceil (2n+1)\log_q(d+1) +
  \log_q(11)\right\rceil,\left\lceil \log_q(60\, n^4\, d\,
  \delta)\right\rceil\right), $$
  where $d=d_x+d_y=(\ell^C+1)\,h(g)$, $n= n_x+n_y = C g + h(g)$, $\delta = 2^n
  d_x^{n_x} d_y^{n_y} = (2 h(g))^{C g+h(g)}\ell^{C^2 g}$. Using the
  inequalities $n_x < C\,g, n_y < h(g), d_x < h(g)\ell^C, d_y < h(g)$, we get
  that $e=O_g(\log_q\ell)$;
\item It constructs the straight-line program representing $f_1^{(M)},\ldots,
  f_n^{(M)}$ with space $\mathcal S'=O(\mathcal S)$ and time $\mathcal
  T'=O(\mathcal T+m\,n)$;\label{it:TM2}
\item It applies the probabilistic Turing machine from
  Theorem~\ref{thm:CafMat} to compute a geometric resolution of the algebraic
  set defined by $f_1^{(M)}(X) = \dots = f_n^{(M)}(X) = 0$; By
  Theorem~\ref{thm:CafMat}, it returns a geometric resolution $((\ell_1,\ldots,
  \ell_n), q(T),(
  q_1(T),\ldots, q_n(T)))$ provided that $f_1^{(M)}(X),\ldots,f_n^{(M)}(X)$ is a
  reduced regular sequence;\label{it:TM3}
\item It computes $\lambda(T) = {\sf GCD}(q(T), f_1(q_1(T),\ldots, q_n(T)), \ldots,
f_m(q_1(T),\ldots, q_n(T)))$;\label{it:TM4}
\item It computes $\nu_1(T) = q_1(T)\bmod \lambda(T),
  \ldots,
  \nu_n(T) = q_n(T)\bmod\lambda(T)$ and returns the geometric resolution
  $((\ell_1,\ldots, \ell_n), \lambda(T), (\nu_1(T), \ldots, \nu_n(T)))$.\label{it:TM5}
\end{enumerate}

We start by showing that the output of this algorithm is indeed a geometric
resolution of the algebraic set $V = \{\mathbf x\in \overline{\mathbb F_q}^n\mid
f_1(\mathbf x) = \dots = f_m(\mathbf x) = 0\}$, assuming that the probabilistic algorithm in
Step~\ref{it:TM3} returns the correct result. Let $W$ be the algebraic set
$\{\mathbf x\in \overline{\mathbb F_q}^n\mid f_1^{(M)}(\mathbf x) = \dots =
f_m^{(M)}(\mathbf x) = 0\}$. Since $\langle f_1^{(M)},\ldots, f_n^{(M)}\rangle
\subset \langle f_1,\ldots, f_m\rangle$,  we have $V\subset W$. 
By construction, the algebraic set defined by the geometric resolution
  $((\ell_1,\ldots, \ell_n), \lambda(T), (\nu_1(T), \ldots, \nu_n(T)))$
is precisely the subset of $W$ where all
polynomials $f_1,\ldots, f_m$ simultaneously vanish.

It remains to prove that this Turing machine runs within the desired complexity.
Steps~\ref{it:TM1} and \ref{it:TM2} require negligible time. Step~\ref{it:TM3} is done within space $O( (\mathcal
  S'+n+d)\delta^2\log(q^e\delta))$ and time $\widetilde O( (n\mathcal T' + n^5)\delta(d\delta + \log(q^e\delta))\log(q^e\delta))$
  (Theorem~\ref{thm:CafMat}), provided that $\delta$ is an upper bound on the
  degrees of the intermediate ideals. Step~\ref{it:TM4} is done within space and time
  bounded by $\widetilde O(\delta\,e\log q (\mathcal T + m))$ by evaluating the
  SLP modulo $q(T)$ (whose degree is bounded by $\delta$) and then by computing
  $m$ GCD using a quasi-linear algorithm.
  Finally, Step~\ref{it:TM5} can be done within time and space $\widetilde
  O(\delta\,e\log q)$.

Then, Proposition~\ref{prop:degreebound} shows
that $\delta$ is an upper bound on the degrees of the intermediate ideals.
Using the facts that $\binom{n_x+d_x}{d_x}\leq(n_x+d_x)^{n_x} = O_g(\ell^{C^2 g})$
and $\binom{n_y+d_y}{d_y}\leq (n_y+d_y)^{n_y} = O_g(1)$, Lemma~\ref{lem:SLPbihom} provide bounds on $\mathcal S$ and
$\mathcal T$.
Summing the complexities leads to the claimed
complexity estimate.
Finally, the probability of success is bounded below by the probability that
the sequence $f_1^{(M)}, \ldots, f_n^{(M)}$ is reduced and regular
(Lemma~\ref{lem:extneeded}) multiplied
by the probability of success of the probabilistic Turing machine in
Theorem~\ref{thm:CafMat}, namely $10/11\cdot 11/12 = 5/6$.
\end{proof}

\section{Computing generic $\ell$-torsion points}\label{sec:generic}

Let $\C$ be a hyperelliptic curve of genus $g$ over $\F_q$ with Weierstrass
form $Y^2=f(X)$ ($f$ monic, squarefree, and $\deg(f)=2g+1$) and $J$ be its
Jacobian. Let $\ell>g$ be a prime not dividing $q$. In this section, we define
a notion of genericity for $\ell$-torsion elements in $J$ and we show that a
geometric resolution for the variety they form can be computed efficiently
using the tools described in Section~\ref{sec:polsys} by solving a polynomial
system of $g^2+g$ equations in $g^2+g$ variables.  Our starting point is the
modelling of the $\ell$-torsion sketched by Cantor in the point (5) of
Section~9 of~\cite{Ca94}. This section and the next one that deals with the
non-generic cases rely heavily on the Mumford representation, that we recall
here, and refer to~\cite{GalbraithMPKC} for more details.

\begin{definition}[Mumford representation]
    Every element in $J$ can be uniquely represented by a pair of
    polynomials $\langle u(X), v(X)\rangle$, where $u$ is monic of degree
    $w\le g$, the polynomial $v$ has degree less than $w$, and $u$
    divides $v^2-f$. 
\end{definition}

We call the integer $w$ in this definition the weight of the element.  An
element in Mumford representation $\langle u(X), v(X)\rangle$,
corresponds to a divisor (called reduced divisor) of the form $\sum_{1\le
i\le w} (P_i - \infty)$, where the $P_i=(x_i,y_i)$ are the affine points
of $\C$ such that $u(x_i)=0$ and $y_i=v(x_i)$, with appropriate
multiplicities. This implies in particular that if two $P_i$'s share the
same $x$-coordinate, then they are equal.

In what follows, we often also call Mumford representation a pair of
polynomials where $u$ is not monic. In that case, unicity of the
representation is no longer guaranteed, but there is no ambiguity in the
element of $J$ represented this way.
\medskip

In genus 1, the $\ell$-torsion points are the points whose abscissae are
the roots of the $\ell$-division polynomial, which has degree
$O(\ell^2)$. For higher genera, Cantor~\cite{Ca94} described analogous polynomials
$\delta_{\ell}$ and $\varepsilon_{\ell}$ such that, for $(x,y)$ a generic
point of the curve and $\ell > g$, we have
\begin{equation*}
    \ell\cdot\left(
    (x,y)-\infty\right)=\left\langle\delta_{\ell}\left(\frac{x-X}{4y^2}
    \right),\varepsilon_{\ell}\left(\frac{x-X}{4y^2}
    \right)\right\rangle.
\end{equation*}

\begin{lemma}\label{lem:deg_delta}
    The polynomial $\delta_{\ell}(X)$ has degree $g$ and its coefficients
    are polynomials in $\F_q[x]$ of degree bounded by
    $\frac{1}{3}g\ell^3 + \Og(\ell^2)$.
    The polynomial $\varepsilon_{\ell}(X)/y$ has degree less than $g$
    and its coefficients are rational fractions in $\F_q(x)$. The degrees
    of the numerators and denominators of these coefficients are bounded
    by $\frac{2}{3}g\ell^3 + \Og(\ell^2)$. Furthermore, any root of a
    denominator is also a root of the leading coefficient of
    $\delta_\ell(X)$.
\end{lemma}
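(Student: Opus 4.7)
The plan is to build on Cantor's recursive construction of $\delta_m$ and $\varepsilon_m$ in~\cite{Ca94} and carry out a careful degree-tracking induction on the multiplier $m$. First I would recall that, for a generic point $(x,y)\in\C$ and an integer $m$, Cantor constructs $\delta_m$ and $\varepsilon_m$ by a recurrence analogous to the classical division polynomial recurrence for elliptic curves, using doubling/addition formulas that express $\delta_{2m}$ and $\delta_{2m+1}$ in terms of $\delta_m,\delta_{m\pm 1}$ and the curve polynomial $f(x)$. The statement $\deg_X(\delta_\ell)=g$ is immediate from the definition via the Mumford representation: for $\ell>g$ and a generic base point, the divisor $\ell\cdot((x,y)-\infty)$ is equivalent to a reduced divisor of weight exactly $g$, so its $u$-coordinate has degree $g$ in the Mumford variable $X$.

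To control the $x$-degrees of the coefficients of $\delta_\ell(X)$, I would proceed by strong induction on $m$. Let $D(m)$ be the maximum $x$-degree of any coefficient of $\delta_m\in\F_q[x][X]$. Cantor's doubling and addition formulas show that moving from level $m$ to levels $2m$ and $2m+1$ increases the degree only by quantities that are linear in $m$ and in $g$ (coming from multiplication by $\delta_{m\pm 1}$ and by powers of $f$, which has degree $2g+1$). Unfolding the recursion, the increment at step $k$ is $\Og(k)$ in the \emph{additive} sense, but the incremental growth compounds quadratically across binary expansions; summing contributions $\sum_{k\le\ell} k^2\sim \ell^3/3$ yields a bound of the form $\frac{1}{3}g\ell^3+\Og(\ell^2)$.

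The analysis for $\varepsilon_\ell$ is parallel, with two differences. First, $\varepsilon_\ell$ plays the role of the $v$-part of the Mumford representation, obtained by solving $v^2\equiv f\pmod{u}$; this essentially doubles the natural $x$-degrees and accounts for the factor $\frac{2}{3}$ in place of $\frac{1}{3}$. Second, the recurrence requires inverting the leading $X$-coefficient of $\delta_m$ in order to normalize the $u$-part of the Mumford representation to be monic; this is the \emph{only} source of denominators in Cantor's formulas. Tracking these inversions along the recursion shows that every denominator appearing in $\varepsilon_\ell(X)/y$ divides a power of the leading $X$-coefficient of $\delta_\ell$, which proves the last assertion of the lemma.

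The main obstacle is the bookkeeping required to obtain the sharp constants $\frac{1}{3}$ and $\frac{2}{3}$: one must separate the genuinely quadratic-in-$m$ contributions (which integrate to the cubic main term) from the lower-order linear contributions (which can be absorbed into the $\Og(\ell^2)$ remainder), while also showing that no extraneous denominators enter the $\varepsilon$-recursion. A convenient device is to work throughout with the normalized polynomials $\mathrm{lc}_X(\delta_m)^{k}\cdot\varepsilon_m$ for a suitable exponent $k$, which are genuinely polynomial in $(x,X)$; the last claim of the lemma then reduces to identifying this exponent, while the degree bound in $x$ follows from applying the inductive argument to these normalized quantities and dividing at the end.
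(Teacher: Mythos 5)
Your overall strategy (induction on Cantor's recurrences with degree bookkeeping) is in the right family, but as written the argument has three concrete gaps, two of which are fatal to the stated constants and to the last claim.

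First, the degree count for $\delta_\ell$ is internally inconsistent: you say each step of the recursion increases the coefficient degree by a quantity \emph{linear} in $m$ and $g$, yet you sum $\sum_{k\le\ell}k^2$ to get the cubic bound. A linear-per-step increment summed over $\ell$ steps gives $\Og(\ell^2)$, and a doubling ladder has only $O(\log\ell)$ steps, so neither reading produces $\ell^3$. The actual mechanism is different: one works with (non-truncated, normalized) versions of Cantor's $\alpha_k,\beta_k$, which satisfy a three-term recurrence from $k-1,k$ to $k+1$ whose coefficients are the quantities $(2y)^{\nu_k}f_k$, polynomials in $x$ of degree $g(k^2-g^2)/2$ — i.e.\ the per-step increment is \emph{quadratic} in $k$. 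Summing $gk^2/2$ gives $g\ell^3/6$ for $\bar\alpha_\ell,\bar\beta_\ell$, and the factor $\frac13$ then comes from the fact that $\delta_\ell$ is quadratic in these (it equals $\bar\alpha_\ell^2-y^{-2}\bar\beta_\ell^2\,E$ up to normalization). Without identifying the quadratic growth of the recurrence coefficients and the squaring step, the constant $\frac13 g$ is not obtained. Similarly, your explanation of the $\frac23$ for $\varepsilon_\ell$ ("solving $v^2\equiv f\bmod u$ doubles the degrees") is not a derivation; the bound actually follows from Cantor's closed formula expressing $\varepsilon_\ell$ as $y\,z\bigl(\psi_{\ell-1}^2\delta_{\ell+1}-\psi_{\ell+1}^2\delta_{\ell-1}\bigr)/(\psi_{\ell-1}\psi_\ell^2\psi_{\ell+1})$ reduced modulo $\delta_\ell$: the numerator contributes $\deg\psi_\ell^2\sim g\ell^2$ plus $\MDC(\delta_{\ell\pm1})\sim g\ell^3/3$, and the single reduction step modulo $\delta_\ell$ adds another $\MDC(\delta_\ell)\sim g\ell^3/3$.

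Second, and most importantly, the last assertion (every root of a denominator is a root of the leading coefficient of $\delta_\ell$) does not follow from "tracking the inversions along the recursion." The inversions you would track are of leading coefficients of $\delta_m$ for intermediate $m$, i.e.\ of $\psi_m^2$ for $m\ne\ell$; indeed the explicit formula above has $\psi_{\ell-1}\psi_{\ell+1}$ in its denominator, and these do \emph{not} divide the leading coefficient $-(4y^2)^g\psi_\ell^2$ of $\delta_\ell$. The needed cancellation is exactly what must be proved, and the paper does it by a separate geometric argument: wherever $\psi_\ell(x)\ne0$ the point $\ell\cdot((x,y)-\infty)$ lies off the theta divisor, so the weight-$g$ Mumford coordinates are regular there, forcing every denominator of the coefficients of $\varepsilon_\ell/y$ to have its roots among those of $\psi_\ell$ (or of $4y^2$). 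Your proposal as stated would leave spurious factors $\psi_{\ell\pm1}$ in the denominators and therefore does not establish the claim.
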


\begin{proof}
    An exact formula for the degree of the leading coefficient of
    $\delta_\ell(X)$ was given
    by Cantor. For the other coefficients and the claims about the roots
    of the denominator,
    the proof is postponed to Section \ref{sec:proof}. We remark that our
    bounds are not tight but they are sufficient for our purpose.
\end{proof} 

We shall prove that Lemma~\ref{lem:deg_delta} is also valid for a non-prime
integer $\ell$. This will be useful in Section~\ref{sec:nongeneric} where
we handle non-generic situations.
\medskip

Later on, we will need explicit names for these coefficients of
$\delta_\ell$ and $\varepsilon_\ell$, so we define the univariate
polynomials $d_i$ and $e_i$ (the notation
does not show the dependence on $\ell$ for simplicity)
such that, after clearing denominators we have:
$$ \delta_{\ell}\left(\frac{x-X}{4y^2}\right) = \sum_{i=0}^gd_i(x)X^i,
\quad \text{and}\quad
\varepsilon_{\ell}\left(\frac{x-X}{4y^2}\right) =
y \sum_{i=0}^{g-1} \frac{e_i(x)}{e_g(x)}X^i.$$

\begin{definition}\label{def:divgen}
    In what follows, we shall say that an element of $J$ is 
    $\ell$-generic if it has weight $g$ and the corresponding
    reduced divisor $\sum_{i=1}^g (P_i-\infty)$ satisfies the following two
    properties:
    \begin{itemize}
        \item For any $i$, the $u$-coordinate of the divisor
            $\ell\cdot(P_i-\infty)$ in Mumford form has   
	    degree $g$;
        \item For any $i\ne j$, the $u$-coordinates of the
            divisors $\ell\cdot(P_i-\infty)$ and $\ell\cdot(P_j-\infty)$
            are coprime.
    \end{itemize}
    
    This implies that all the $P_i$ are distinct. 
    This also implies that if an affine point $P$ occurs in the support of a
    $\ell\cdot(P_i-\infty)$ then neither $P$ nor $-P$ appears in the support
    of another $\ell\cdot(P_j-\infty)$.
\end{definition}

\begin{proposition}\label{prop:syshyp}
    For any $\varepsilon>0$, there is a constant $D$ 
    such that for all
    prime $\ell >g$ coprime to the base field characteristic, there is a
    Monte Carlo algorithm which computes an $\mathbb
    F_{q^e}$-geometric
    resolution of the sub-variety of $J[\ell]$ consisting of
    $\ell$-generic $\ell$-torsion elements, where $e=O_g(\log\ell)$. 
    The time and space complexities of this algorithm are bounded by
    $O_g(\ell^{Dg}(\log q)^{2+\varepsilon})$
    and it returns the correct result with probability at least $5/6$.
\end{proposition}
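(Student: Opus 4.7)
The plan is to model the variety of $\ell$-generic $\ell$-torsion elements as the zero locus of a bi-homogeneous polynomial system with $g$ \emph{high-degree} variables and $g^2$ \emph{low-degree} variables, and then to invoke Proposition~\ref{prop:mainpolsys}. Following the sketch in item~(5) of Section~9 of~\cite{Ca94}, the high-degree block will consist of the $x$-coordinates $x_1,\dots,x_g$ of the support of the divisor $D=\sum_{i=1}^g(P_i-\infty)$, while the low-degree block will encode the Mumford $u$-polynomials of the weight-$g$ divisors $\ell\cdot(P_i-\infty)$: for each $i\in[1,g]$, I introduce $g$ new coefficients $u^{(i)}_0,\dots,u^{(i)}_{g-1}$ for the monic polynomial $\widetilde U_i(X)=\delta_\ell((x_i-X)/(4y_i^2))/d_g(x_i)$, using $y_i^2=f(x_i)$ to eliminate $y_i$.

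The $g^2+g$ equations will split into two families. First, the $g^2$ coefficient-matching equations $d_g(x_i)\,u^{(i)}_j = d_j(x_i)$ for $i\in[1,g]$ and $j\in[0,g-1]$ are linear in the low-degree block and of degree $O(g\ell^3)$ in the high-degree block, thanks to Lemma~\ref{lem:deg_delta}. Second, $g$ equations express that the Mumford sum $\sum_{i=1}^g\langle U_i,V_i\rangle$ equals $0$ in $J$. By $\ell$-genericity the $U_i$'s are pairwise coprime, so this sum is given pre-reduction by $\langle\prod_i U_i,W\rangle$ where $W$ comes from the $V_i=\varepsilon_\ell((x_i-X)/(4y_i^2))$ via the Chinese Remainder Theorem; requiring this weight-$g^2$ divisor to reduce to $\langle 1,0\rangle$ is a codimension-$g$ condition, by Riemann--Roch the Abel--Jacobi fibre of $0$ inside $\mathrm{Sym}^{g^2}(\C)$ being of codimension $g$. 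These $g$ additional equations can be written with degree $O(g\ell^3)$ in the $x_i$'s (inherited from $\delta_\ell$, $\varepsilon_\ell$) and degree polynomial in $g$ in the $u^{(i)}_j$'s. The resulting system thus satisfies the hypotheses of Proposition~\ref{prop:mainpolsys} with $n_x=g$, $n_y=g^2$, $d_x=O(g\ell^3)$, $d_y=O_g(1)$, and $m=g^2+g$.

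Before applying Proposition~\ref{prop:mainpolsys}, I need to establish that the ideal $I$ generated by this system is zero-dimensional and radical. Zero-dimensionality follows from the finiteness of $J[\ell]\cong(\Z/\ell\Z)^{2g}$ and the bijection between solutions of the system and $\ell$-generic $\ell$-torsion elements, which in turn is a direct consequence of Cantor's identity for $\ell\cdot(P_i-\infty)$ together with the uniqueness of Mumford representations. Radicality is the delicate point: I would deduce it from the étaleness of the multiplication-by-$\ell$ map on $J$ (valid since $\ell$ is coprime to the characteristic of $\F_q$), which forces the Jacobian matrix of the system to have full rank at each $\ell$-generic solution. Once these two prerequisites are settled, Proposition~\ref{prop:mainpolsys} produces a Monte Carlo algorithm outputting an $\mathbb F_{q^e}$-geometric resolution of the $\ell$-generic sub-variety of $J[\ell]$ within the announced complexity $O_g(\ell^{Dg}(\log q)^{2+\varepsilon})$, with $e=O_g(\log\ell)$ and success probability at least $5/6$. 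The main obstacle I foresee is twofold: writing the $g$ ``sum-to-zero'' equations explicitly in a way that respects the announced bi-degrees, and verifying the radicality claim rigorously while keeping the modelling faithful to the $\ell$-generic stratum.
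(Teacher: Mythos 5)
Your overall architecture matches the paper's: a bi-homogeneous system with the $x_i$ (and $y_i$) as the high-degree block, degrees controlled by Lemma~\ref{lem:deg_delta}, radicality from \'etaleness of $[\ell]$, and then Proposition~\ref{prop:mainpolsys}. But the proof has a genuine gap exactly where you flag your ``main obstacle'': the $g$ ``sum-to-zero'' equations are never constructed, and constructing them with the announced bi-degrees is the heart of the matter. Asserting via Riemann--Roch that the Abel--Jacobi fibre of $0$ has codimension $g$ in $\mathrm{Sym}^{g^2}(\C)$ tells you the expected number of conditions, not that they can be realized as $g$ polynomials of degree $O_g(\ell^3)$ in the $x_i$ and $O_g(1)$ in the $u^{(i)}_j$. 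Writing ``$\langle \prod_i U_i, W\rangle$ reduces to $\langle 1,0\rangle$'' explicitly would require running Cantor's reduction (about $g/2$ steps of polynomial division, each introducing denominators and multiplying degrees) or the CRT computation of $W$ (whose coefficients are rational in the $u^{(i)}_j$), and neither is shown to respect the degree bounds. The paper circumvents this entirely: instead of taking the Mumford coefficients of the $U_i$ as the low-degree block, it introduces the coefficients of an interpolating function $\varphi = P(X)+YQ(X)$ as unknowns ($g^2-g$ of them after normalization), and principality of $\sum_i \ell\cdot(P_i-\infty)$ becomes the $g$ congruences $P+Qv_i\equiv 0\bmod u_i$, i.e.\ $g^2$ equations that are \emph{linear} in the coefficients of $P,Q$ and of degree $O_g(\ell^3)$ in $(x_i,y_i)$. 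The existential quantifier over the function is absorbed into the variable set, and no symbolic reduction is needed.

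Two further points. First, your system does not enforce $\ell$-genericity: you invoke coprimality of the $U_i$ to apply CRT, but nothing in the equations restricts solutions to the $\ell$-generic stratum; the paper adds the inequality $\prod_i d_g(x_i)\prod_{i<j}\Res(u_i,u_j)\ne 0$ (turned into an equation $T\cdot(\cdots)=1$ with a fresh low-degree variable), which also absorbs the denominators $e_g(x_i)$ of $\varepsilon_\ell$. Without it both faithfulness and radicality can fail. Second, eliminating the $y_i$ is not possible as stated: the $v$-parts $\varepsilon_\ell((x_i-X)/4y_i^2)$ are odd in $y_i$, so the sum-to-zero condition distinguishes $P_i$ from $-P_i$ and genuinely involves the $y_i$. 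Finally, on radicality, \'etaleness gives you radicality of the torsion ideal in Mumford (symmetric) coordinates; since your variables include the individual $x_i$, you must still argue, as the paper does, that each torsion element lifts to exactly $g!$ distinct unsymmetrized solutions (which is where $\ell$-genericity, i.e.\ distinctness of the $P_i$, is used again).
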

\begin{proof}
  Let $D=\sum_{i=1}^g(P_i-\infty)$ be an $\ell$-generic divisor
  in $J$. We shall consider a system equivalent to $\ell\cdot D=0$ but let
  us first introduce some notation. For each point $P_i=(x_i,y_i)$ in
  the support of $D$, we denote $\langle u_i,v_i \rangle$ the Mumford
  form of $\ell\cdot(P_i-\infty)$ and $(\alpha_{ij},\beta_{ij})_{1\leq j\leq
  g}$ the
  coordinates of the $g$ points in its support counted with
  multiplicities, which means that for
  any $i$ the $g$ roots of $u_i$ are exactly the $\alpha_{ij}$, and
  that for any $j$, $\beta_{ij}=v_i(\alpha_{ij})$. Note that using
  the previous notation, $u_i(X)=\delta_{\ell}\left(\frac{x_i-X}{4y_i^2}
  \right)$ and $v_i(X)=\varepsilon_{\ell}\left(\frac{x_i-X}{4y_i^2}
  \right)$. 

  We have $\ell\cdot D=0$ if and only if the sum of the divisors
  $\sum_{i=1}^g
  \ell\cdot (P_i-\infty)$ is a principal divisor. The only pole is at
  infinity, so this is equivalent to the existence of a non-zero
  function $\varphi\in\mathbb F_q(\C)$ of the form $P(X)+YQ(X)$ with $P$ and $Q$ two
  polynomials such that the $g^2$ points $(\alpha_{ij},\beta_{ij})$ are
  the zeros of $\varphi$, with multiplicities. Since we want $\varphi$ to 
  have $g^2$ affine points of intersection with the curve $\mathcal{C}$ 
  (once again, counted with multiplicities), the polynomial
  $\Res_Y(Y^2-f,P+YQ)=P^2-fQ^2$ must have degree $g^2$ which yields
  $2\deg(P)\le g^2$ and $2\deg(Q)\le g^2-2g-1$. Exactly one of those
  two bounds is even (it depends on the parity of $g$), and for
  this particular bound, the inequality must be an equality, otherwise
  the degree of the resultant would not be $g^2$. Since the function
  $\varphi$ is defined up to a multiplicative constant, we can
  normalize it so that the polynomial $P^2+fQ^2$ is monic, which is
  equivalent to enforce that either $P$ or $Q$ is monic depending on the
  parity of $g$.

  For a fixed $i\in[1,g]$, requiring the $(\alpha_{ij},\beta_{ij})$ to be zeros of
  $\varphi$ amounts to asking for the $\alpha_{ij}$ to be roots of
  $P(X)+Q(X)v_i(X)$, with multiplicities. Since the $\alpha_{ij}$ are by definition the
  roots of the $u_i$, $\ell\cdot D=0$ is equivalent to $g$ congruence
  relations $P+Qv_i \equiv 0 \bmod u_i$ which we can rephrase using
  Cantor's polynomials:
  \begin{equation}\label{eq:moddelta}
    P(X)+\varepsilon_{\ell}\left(\frac{x_i-X}{4y_i^2} \right)Q(X)
    \equiv 0 \bmod \delta_{\ell}\left(\frac{x_i-X}{4y_i^2} \right).
  \end{equation}
  Thus, for any $\ell$-generic divisor, $\ell\cdot D=0$ is equivalent to the
  existence of $P$ and $Q$ satisfying the above $g$ congruence relations.

  The variables are the coefficients of $P$ and $Q$, as well as
  the $x_i$ and $y_i$. With the degree conditions and the
  normalization, we have $g^2-g$ variables coming from $P$ and $Q$.
  Adding the $2g$ variables $x_i$ and $y_i$, we get a total of
  $g^2+g$ variables.
  Each one of the $g$ congruence relations~\eqref{eq:moddelta} amounts to $g$
  equations providing a total of $g^2$ conditions on
  the coefficients of $P$ and $Q$. The fact that the $(x_i,y_i)$ are
  points of the curve yields the $g$ additional equations $y_i^2=f(x_i)$.
  Finally, we have to enforce the $\ell$-genericity of the solutions,
  which can be done by requiring that $\prod_i d_g(x_i)
  \prod_{i<j}\Res(u_i,u_j)\not=0$.  Therefore, we get a polynomial
  system with $g^2+g$ equations in $g^2+g$ variables, together with an
  inequality. We remark that in principle, the denominators $e_g(x_i)$
  involved in $\varepsilon_\ell$ would generate additional 
  conditions, but by Lemma~\ref{lem:deg_delta} this is already covered
  by the condition $d_g(x_i)\ne 0$.

  In order to apply Proposition~\ref{prop:mainpolsys}, we now estimate
  the degrees to which the variables occur in the equations. We start
  with the equations coming from~\eqref{eq:moddelta}. Each congruence
  relation is obtained by reducing $P+Qv_i$, which is a polynomial of
  degree $O(g^2)$ in $X$, by $u_i$ which is of degree $g$.
  We can do it by repeatedly replacing
  $X^g$ by $-\sum_{j<g} (d_j(x_i)/d_g(x_i)) X^j$,
  which we will have to do at most
  $O(g^2)$ times. Since by Lemma~\ref{lem:deg_delta} the $d_j$ have degree 
  in $O_g(\ell^3)$ in $x_i$ the fully reduced polynomial will have
  coefficients that are fractions for which
  the degrees of the numerators and of the
  denominators are at most
  $O_g(\ell^3)$ in the $x_i$ variables. In these equations, the degree
  in the $y_i$ variables and in the variables for the coefficients of
  $P$ and $Q$ is 1. The degrees in $x_i$ and $y_i$ in the curve
  equations are $2g+1$ and $2$ respectively.

  It remains to study the degree of the inequality. Each resultant is
  the determinant of a $2g\times 2g$ Sylvester matrix whose coefficients are the
  $d_i$, which have degrees bounded by $O_g(\ell^3)$. Since for any $i$
  there are exactly $g$ resultants involving $x_i$ in the product, the
  degree of this inequality in any $x_i$ is in $O_g(\ell^3)$, and it
  does not involve the other variables. In order to be able to use
  Proposition~\ref{prop:mainpolsys}, we must model this inequality by an
  equation, which is done classically by introducing a new variable $T$ and
  by using the equation $T\cdot\prod_i d_g(x_i)
  \prod_{i<j}\Res(u_i,u_j)=1$.

  To conclude, we have a polynomial system with two blocks of
  variables: the $2g$ variables $x_i$ and $y_i$ and the $g^2-g$
  variables coming from the coefficients of $P$ and $Q$. The degree of
  the equations in the first block of variables grows cubically in
  $\ell$, while the degree in the other block of variables depends only
  on $g$. The system therefore verifies the conditions of
  Proposition~\ref{prop:mainpolsys} and the complexity follows, provided
  that we can show that the system is $0$-dimensional and radical.

  Let us consider the sub-variety $S\subset J[\ell]$ consisting of
  $\ell$-generic $\ell$-torsion elements, and $I$ the corresponding
  ideal. More precisely, we see $I$ as the ideal of a sub-scheme of the
  $\ell$-torsion scheme, which is the kernel of a finite and étale map
  because $\ell$ is coprime to the characteristic.
  Therefore $I$ is 0-dimensional and radical. Since all the elements in
  $S$ have the same weight~$g$ we can use the Mumford coordinates
  $\langle u(X), v(X)\rangle$ with $\deg u=g$ and $\deg v<g-1$ as a
  local system of coordinates to represent them. But the polynomial
  system that we have built is with the $(x_i,y_i)$ coordinates, that
  is, it generates the ideal
  $I^\mathrm{unsym}$ obtained by adjoining to the equations defining
  $I$ the $2g$ equations coming from $u(X) = \prod(X-x_i)$ and
  $y_i=v(x_i)$. Then we have $\deg I^\mathrm{unsym} = g! \deg I$.
  By the $\ell$-genericity condition, all the fibers in the variety
  have exactly $g!$ distinct points corresponding to permuting the
  $(x_i,y_i)$ which are all distinct. Therefore the radicality of $I$
  implies the radicality of $I^\mathrm{unsym}$ and we can apply
  Proposition~\ref{prop:mainpolsys} to our polynomial system.
\end{proof}

We emphasize that, although the algorithm in Proposition~\ref{prop:syshyp} is
Monte Carlo, we expect that it returns a correct and verifiable result in most
of the cases. Indeed, if all the $\ell^{2g}-1$ nonzero $\ell$-torsion elements are
$\ell$-generic (which is the situation that we expect to happen in most of the
cases) and if the algorithm returns the correct result, then we can check that
these elements are indeed $\ell$-torsion elements, and that we have all of
them. In that favorable case, the proof of Proposition~\ref{prop:Jl} is
completed.

\section{Non-generic cases}\label{sec:nongeneric}

For most of the curves, we expect that for all the primes $\ell$
considered in Algorithm~\ref{algo:pila} the set $J[\ell]$ contains only
$\ell$-generic elements (apart from 0), so that the result of the
previous section is sufficient. If this is not the case, then it is very
likely that the orbit under the Frobenius endomorphism of the $\ell$-torsion elements computed contains an
$\F_\ell$-basis of $J[\ell]$, so that we can easily recover the missing
elements using the group law or the Frobenius. Still, unless we could prove otherwise, we
can not exclude the case where the set of $\ell$-generic $\ell$-torsion
elements generate a proper subgroup of $J[\ell]$ which is stable under the action of $\varphi$. 
In that unlikely case,
we would maybe not be able to deduce $\chi_\ell$. An option is then to
skip this unlucky $\ell$ and proceed with the algorithm; this would only
marginally increase the largest considered $\ell$. But then, we would be
left to prove that the number of unlucky $\ell$'s is small enough.
The number of isomorphism classes of hyperelliptic curves of genus $g$
over $\F_q$ grows like $q^{2g-1}$. Assuming the non-genericity of
the $\ell$-torsion is driven by the vanishing of some polynomial of
degree $\ell^{O(1)}$ that we consider as random,
independent events, we see that when $q$ grows to infinity and $g$ is
fixed, the probability of having a single curve for which the $\ell$-torsion is
non-generic for all the small $\ell$'s goes to zero. But this kind of
discussion does not seem to lead to a provable result, because the
polynomials involved have a highly non-generic Galois structure, and
furthermore because there exist curves for which this can even become more
structured, for instance those whose Jacobians are not simple.

Our only remaining option is to perform a tedious, systematic study of
all the non-generic cases and to show that they can all be modelled by
polynomial systems that can be solved within the target complexity. The
number of these systems must also be bounded independently of $\ell$, so
that with our setting where $g$ is fixed and $q$ grows to infinity the
global complexity remains the same. All this is the purpose of the
present section. As a warm-up, we will first describe some simple
degeneracy cases and, informally, how to deal with them. Then we will
consider the most general non-generic case and construct the polynomial
system to model it in order to reach our result. Note that all
these systems will have more equations ($O(g^4)$, see Table~\ref{tab:degrees}) than
variables ($O(g^2)$, see Table~\ref{tab:variables}), which is no wonder
since we expect them to have no solution in general.

\subsection{Simple degeneracies}

\paragraph*{Case 1: Low weight $\ell$-torsion elements.}

In order to compute the $\ell$-torsion elements that satisfy all the
conditions of $\ell$-genericity except that their weight is less than $g$, we
can proceed as in the proof of Proposition~\ref{prop:syshyp}
with the following modifications.  This time, $D=\sum_{i=1}^w
(P_i-\infty)$, and the only difference is that there are $w$ points
instead of $g$. Following the same method, we search $\varphi$ of the
form $P(X)+YQ(X)$ such that the points in the reduced divisor
$\ell\cdot(P_i-\infty)$ are exactly the zeros of $\varphi$. We now want
$\varphi$ to have $gw$ points of intersection with $\C$ instead of $g^2$,
and we similarly deduce $2\deg(P)\le gw$ and $2\deg(Q) \le gw-2g-1$.  By
similar parity considerations we deduce that exactly one of these bounds
is even, and the corresponding polynomial will be made monic to normalize
the function.  The number of variables from $P$ and $Q$ is thus $gw-g$,
and after adding the $2w$ variables $x_i$ and $y_i$, we have a total of
$(g+1)w+w-g$ variables. As for the number of equations, the number of
congruence relations is now $w$ but the relations themselves remain
unchanged, and we get a total of $(g+1)w$ equations after adding the $w$
equations $y_i^2=f(x_i)$. Since we keep the degrees unchanged but reduce
the number of variables, the complexity bounds are still valid in this
case.

\paragraph*{Case 2: Multiple points in the $\ell$-torsion divisor.}

It may happen that the reduced forms of $\ell$-torsion divisors contain multiple
points. In that case, the $u$-coordinate in the Mumford representation of such
a point is not squarefree. Although the modelling by the polynomial system
described in Section~\ref{sec:generic} is still faithful, such multiple points
will induce multiplicities since what we actually compute is the variety
describing the points in the reduced divisor. Therefore, the ideal generated 
by the polynomial system is not radical in this case. We use the following
workaround: For $\lambda=(\lambda_1,\ldots, \lambda_k)$ a partition
of $w$,
we write a polynomial system generating a radical ideal whose solutions
represent the reduced divisors of the form $D = \lambda_1\,P_1 + \dots +
\lambda_k\,P_k - w\,\infty$. To build this polynomial system, we do as if we were
looking for elements of weight $k$, but instead of multiplying $P_i$ by
$\ell$, we multiply it by $\lambda_i\,\ell$, using Cantor's polynomials
$\delta_{\lambda_i\ell}$ and $\varepsilon_{\lambda_i\ell}$. This
system has the same number of variables and equations as if we were looking for
elements of weight $k$. Since $\lambda_i$ is bounded above by $g$, the
degrees of the equations are multiplied by a quantity which depends only on $g$
but not on $\ell$. Consequently, the complexity bounds are still valid in this
case. To avoid multiplicity problems that could arise from
subpartitions of $\lambda$, we add the inequalities 
$x_i\neq x_j$ for $i\neq j$, where $x_i$ is the $x$-coordinate of $P_i$.
Again, this does not change our complexity estimate.

\paragraph*{Case 3: Low weight after multiplication by $\ell$.}

We study here the case where the $\ell$-genericity property that is not
verified is that the $\ell\cdot(P_i-\infty)$ are of weight $g$, all the
others being satisfied. We denote by $w_i\le g$ the weight of
$\ell\cdot(P_i-\infty)$. Then each $u_i$ will have degree $w_i$, so that
each congruence relation~\eqref{eq:moddelta} yields only $w_i$ equations
instead of $g$. In Cantor's article (on top of page 141 in~\cite{Ca94}),
it is stated that $\ell\cdot(P_i-\infty)$ is of weight $w_i$ if
and only if for any $k$ such that $w_i< k \le g$ we have
$\psi_{\ell-k+w_i+1}(x_i)=0$ and $\psi_{\ell-g+w_i}(x_i)\not=0$, where the polynomials
$\psi_i$ are efficiently computable and of degrees bounded by $O_g(\ell^2)$.
Therefore the total number of equations is
unchanged.  Since the function $\varphi$ will have to vanish at $\sum_i
w_i$ points instead of $g^2$, we also reduce the degree of $P$ and $Q$
accordingly. The number of variables from $P$ and $Q$ thus becomes
$\sum_iw_i-g$ which is smaller than in the generic case, while the number
of equations remains the same, and their degrees are also smaller.
Thus we can still describe this non-generic situation with
systems that can be handled within the same complexity bounds.

\paragraph*{Case 4: Non semi-reduced principal divisor.}

We now consider the case where the $\ell$-genericity property fails due
to the presence of a point of abscissa $\xi$ which appears with positive
multiplicity $\nu_{i}$ in an $\ell\cdot(P_i-\infty)$ and with a negative
multiplicity $-\nu_{j}$ in another $\ell\cdot(P_j-\infty)$. Let
$\nu=\min(\nu_i,\nu_j)$. This event implies
that $(X-\xi)^\nu$ divides both $P$ and $Q$ so that we can write
$\varphi(X,Y)=(X-\xi)^\nu(\widetilde{P}(X)+Y\widetilde{Q}(X))$,
with $\widetilde{P}$ coprime
to $\widetilde{Q}$.
The number of variables coming from $\varphi$ is reduced compared to the
generic case: we add one (the variable $\xi$), but the number of
coefficients in $\widetilde{P}$ is reduced by $\nu$ compared to $P$, and
the same is true for $\widetilde{Q}$ and $Q$.
To write the conditions on $\varphi$, we write the congruences exactly
like in the generic case and we add conditions to ensure that the
multiplicities are respected. Namely, $u_i$, $u_j$ and $v_i+v_j$ must all
be divisible by $(X-\xi)^\nu$, which adds $3\nu\le 3g$ equations. The
degree in $\xi$ in these equations is bounded by $g^2$. Since this does
not depend on $\ell$, the complexity result is maintained.
The general study will cover the case where there are several $\xi$'s at
which the semi-reduction genericity assumption fails. Also, there is no
reason why such a root $\xi$ should occur in only two of the
$\ell\cdot(P_i-\infty)$'s. Such a situation will be also taken into
account in Section~\ref{sec:combining}.

\paragraph*{Case 5: Multiplicity in $\ell\cdot D$.}

The last situation that could lead to not satisfying $\ell$-genericity is
when the same point is shared within different $\ell\cdot(P_i-\infty)$,
which causes some trouble as the congruence relations of the generic case
will not be able to handle the subsequent multiplicity. Note that if the
multiplicity occurs only within a single $\ell\cdot(P_i-\infty)$ this is
already dealt within the generic case. One can view our method
as using the Chinese remainder theorem on the modular
conditions~\eqref{eq:moddelta} to see that multiplicities within a single
congruence is handled whereas common factors within different
$u_i$-polynomials are an obstacle that needs special strategies.
There is some similarities with the previous case that also implies a
common factor between two different $u_i$'s. 

We devise the following workaround: instead of considering the
congruences modulo the $u_i$'s separately, we group them into a single
congruence of the form $P+QV\equiv 0 \bmod U$, with $U=\prod_i u_i$ and
$V$ a polynomial whose coefficients shall be new variables such that $V
\equiv v_i \bmod u_i$ for all $i$.  Note that if some non semi-reduced
case occurs simultaneously, $U$ must actually be divided by the
aforementioned $X-\xi$; such situations will be dealt with later, in the
general study (Section~\ref{sec:combining}).
In order for $V$ to encode enough information and ensure that the
condition $P+QV \equiv 0 \bmod U$ enforces a function with exactly the
correct principal divisor, we have to follow Mumford's representation and
add the condition $U | V^2 -f$, with $\deg V < \deg U$. Together with
the other conditions on $U$ and $V$, we then have existence and unicity (up to a constant factor): 
they are the result of Cantor's composition algorithm.

In order to write the polynomial system modelling this situation, some
care must be taken so as to stay within the scope of
Proposition~\ref{prop:mainpolsys}. The polynomial $U$ is of degree $g^2$
and its coefficients are polynomials in the $x_i$'s of degrees bounded by
$O_g(\ell^3)$.  New variables are added for the coordinates of $V$. For
each $i$, the condition $V \equiv v_i \bmod u_i$ is converted in $O(g)$
equations, with degrees $O_g(\ell^3)$ in $x_i$ and $1$ in the coordinates
of $V$.  The condition $U | V^2-f$ contributes to $O(g^2)$ additional
equations, each of them of degree $2$ in the coordinates of $V$, and degree
$O_g(\ell^3)$ in the coordinates $x_i$. And finally, the equation
$P+QV\equiv 0 \bmod U$, contributes also to $O(g^2)$ equations, each of
them of degree $1$ in the coordinates of $V$, $P$ and $Q$, and of degree
$O_g(\ell^3)$ in the coordinates $x_i$.
Skipping the details, we can again apply
Proposition~\ref{prop:mainpolsys} and get the expected complexity.

\subsection{Combining all possible degeneracies}
\label{sec:combining}
\paragraph{A data structure to describe each type of non-genericity.}

We want to describe a family of polynomial systems that covers all the
possible non-generic cases, possibly mixing all kind of problems that
have been listed. We begin by grouping together non-genericity situations
that can be covered by the same polynomial system.

We consider an $\ell$-torsion divisor $D$
of weight $w\le
g$ (like in case 1). Next, a partition $\lambda=(\lambda_1,\ldots,
\lambda_k)$ of $w$ is picked to represent the multiplicity pattern in the
$u$-coordinate of the $\ell$-torsion divisor, as in case~2 so that
$D=\sum_{i=1}^k\lambda_i(P_i-\infty)$. Then, a vector
$t=(t_1,\ldots,t_k)$ is chosen, to
represent the weights of the $P_i$ after multiplication by $\lambda_i\,\ell$ as in case~3: For
$i$ in $[1,k]$, the reduced divisor $\lambda_i\,\ell\cdot(P_i-\infty)$ is of weight
$t_i$. Then, we need to consider how many common or opposite points these
divisors are in their support to take into account the cases~4 and~5.  We
denote by $Q_1,\ldots,Q_s$ the points in the union of the supports of all
the reduced divisors $\lambda_i\,\ell\cdot(P_i-\infty)$, keeping only one point in
each orbit under the hyperelliptic involution.
We represent the non-genericity by a $k\times
s$ matrix $M$ such that its non-zero entries $m_{ij}$ verify
$m_{ij}=\ord_{Q_j}(\lambda_i\,\ell\cdot(P_i-\infty))$ when
$Q_j$ is in the support of $\lambda_i\,\ell\cdot(P_i-\infty)$ or
$m_{ij}=-\ord_{Q_j'}(\lambda_i\,\ell\cdot(P_i-\infty))$ when the hyperelliptic
conjugate $Q_j'$ of $Q_j$ is in the support. Note
that this matrix, that we shall call the matrix of shared points,
represents both multiplicities and non-semi-reduction. Since the row $i$
represents what happens with points in the support of
$\lambda_i\,\ell\cdot(P_i-\infty)$, which is of weight $t_i$, the sum of the
absolute values of the entries of the row $i$ of $M$ is equal to $t_i$.

Also, by construction,
in each column, there is at least one non-zero entry.
An additional complication arises when one of the $P_i$ is a ramification
point, i.e. when its $y$-coordinate is zero, because this would cause
multiplicities if care is not taken, leading to non-radicality of the
polynomial system we build. Since this corresponds to $P_i-\infty$ being
of order 2, the weight $t_i$ is equal to $\lambda_i\ell \bmod 2$, namely
0 or 1. If $t_i=0$, then the divisor $D-\lambda_i(P_i-\infty)$ is also an
$\ell$-torsion divisor of weight $w-\lambda_i$, so that we can reconstruct $D$
from another polynomial system. There is however no obvious way to
preclude the possibility $t_i=1$. Therefore, we will encode the fact
that $P_i$ is a ramification point by a bit $\epsilon_i$ that can be set
only in the cases where $t_i=1$ and $\lambda_i=1$.

A tuple $(w, \lambda=(\lambda_1,\ldots, \lambda_k), t=(t_1,\ldots,t_k),
\epsilon=(\epsilon_1,\ldots,\epsilon_k), M)$ is from now on the piece of
data with
which we represent a non-generic situation, and a polynomial system will
be associated to each tuple. Changing the order of the columns of $M$ amounts
to permuting the points $Q_j$.
Also, changing the sign of all the entries of a
column $j$ corresponds to taking the opposite of the point $Q_j$. While
it would not change the final complexity not to do so, it therefore makes
sense to consider only normalized tuples, in the sense that 
the columns of $M$
are sorted in lexicographical order, and the choice between a point $Q_j$ and its
opposite is done so that the sum of all elements in the corresponding
column is nonnegative. We remark that this is not enough to guarantee
that two normalized tuples do not describe similar situations. For
instance, if $\lambda=(1,\ldots,1)$ and two $t_i$ values are equal, then permuting the two
corresponding rows could lead to another normalized matrix that would
describe the same situation. This is not a problem for the general
algorithm: we might get the same $\ell$-torsion elements from two
different systems, but what is important to us is non-multiplicity (i.e.
radicality of the ideal) in each individual system.

\begin{definition}
    A {\em normalized non-genericity tuple} is a tuple
    $(w,\lambda,t,\epsilon, M)$, where
    $1\le w\le g$ is an integer, $\lambda=(\lambda_1,\ldots,\lambda_k)$ is a
    partition of $w$, $t$ and $\epsilon$ are vectors $t=(t_1,\ldots,t_k)$ 
    and $\epsilon=(\epsilon_1,\ldots,\epsilon_k)$ of the same length
    as $\lambda$
    with $1\le t_i\le g$ and $\epsilon_i\in\{0,1\}$, where $\epsilon_i$
    can be $1$ only if $t_i=1$ and $\lambda_i=1$,
    and finally
    $M$ is
    a matrix with $k$ rows and $s$ columns, where $0\le s\le g\,k$, and
    its entries are integers such that:
    \begin{itemize}
        \item For all $1\le i\le k$, the sum of the absolute values of the entries on the row
            $i$ is equal to $t_i$;
        \item The columns are sorted in lexicographical order;
        \item The sum of the rows of the matrix is a vector whose coordinates
	  are nonnegative.
    \end{itemize}
\end{definition}

From the discussion above, any $\ell$-torsion element is described by
(at least) one normalized non-genericity tuple.
In the following we will
give a polynomial system for each normalized non-genericity tuple, so
that all $\ell$-torsion elements described by it are modelled by this
system. Furthermore, the system will have the properties required to
apply Proposition~\ref{prop:mainpolsys}, so that the complexity result
will follow.

Before starting this, we discuss briefly a bound on the number of
normalized non-genericity tuples. Assuming everything is always of
maximal size, and not sorted, we have $g$ choices for $w$, then at most $g^g$
choices for $\lambda$ and $t$, at most $2^g$ choices for $\epsilon$, 
and finally at most $(g^{2g+1})^{g^2}$ choices for
$M$, which gives $g^{O(g^3)}$. As bad as it is, such a factor that depends
only on $g$ will not hinder the final complexity estimate in $O_g\left((\log
q)^{O(g)}\right)$, as explained in Section~\ref{sec:schoof}.

\paragraph{Non-generic division polynomials.}

The expression of $\lambda_i\ell \cdot (P_i-\infty)$ in Mumford representation
will be the same as in the generic case when its weight $t_i$ is equal to
$g$ and Lemma~\ref{lem:deg_delta} can be applied. But when $t_i$ is
strictly less than $g$, the weight-$g$ coordinate system is no longer
available; this is explicitly visible by the fact that the denominator
$e_g(x_i)$ of the coefficients of the $v$-polynomial vanishes.

Therefore we need to use a weight-$t$ coordinate system for describing a
non-generic divisor $\lambda_i\ell \cdot (P_i-\infty)$ in Mumford representation.
In this paragraph, in order to keep simple notation, we will work with
$\ell\cdot(P_i-\infty)$, keeping in mind that we do not impose any
condition on $\ell$, so that we can later replace $\ell$ by
$\lambda_i\ell$.

We consider, for $1\le t<g$, the set $V_{\ell,t}$ of points of the curve which are mapped to a
weight-$t$ divisor after multiplication by $\ell$:
$$ V_{\ell, t} = \left\{ (x,y)\in \C\ | \ \ell \cdot ((x,y)-\infty)\
\text{is of weight $t$} \right\}.$$
This is a (possibly empty) variety of dimension 0 that can be described
with the classical (generic) division polynomials of Cantor: we define
$$ \Delta_{\ell, t} = {\sf GCD}(\psi_\ell(x), \psi_{\ell-1}(x), \ldots,
\psi_{\ell-g+t+1}(x)),$$
so that $V_{\ell,t}$ is precisely the set of points $(x,y)$ for which
$\Delta_{\ell, t}(x) = 0$ and $\psi_{\ell-g+t}(x)\neq 0$, as stated by Cantor
in~\cite{Ca94} on page 141. The polynomial $\psi_\ell$ is essentially the
square root of the leading coefficient of $\delta_\ell$. It can be
computed efficiently and has degree in $O_g(\ell^2)$ by Theorem~8.17
of~\cite{Ca94}.
To avoid multiplicities, we define
$\tilde{\Delta}_{\ell, t}(x)$ the square-free polynomial whose roots are
exactly the roots of $\Delta_{\ell, t}(x)$ that are not roots of
$\psi_{\ell-g+t}(x)$. The degree of $\tilde{\Delta}_{\ell, t}(x)$ is again bounded by
$O_g(\ell^2)$. Furthermore since the points of $V_{\ell,t}$ come in
pairs of conjugate points sharing the same $x$-value, the degree of
$V_{\ell, t}$ is $2\deg\tilde{\Delta}_{\ell, t}(x)$.

\begin{definition}\label{def:nongendivpol}
    The non-generic division polynomials $\frak{u}_{\ell,t}$ and
    $\frak{v}_{\ell,t}$ are the polynomials in $X$ with coefficients in
    $\F_p[x,y]/(\tilde{\Delta}_{\ell, t}(x), y^2-f(x))$ such that
    $$ \ell \cdot ((x,y)-\infty) = \Big\langle \frak{u}_{\ell,t}(X),
    \frak{v}_{\ell,t}(X)\Big\rangle,$$
    in weight-$t$ Mumford representation: $\frak{u}_{\ell,t}(X)$ is monic
    of degree $t$, $\frak{v}_{\ell,t}(X)$ is of degree at most
    $t-1$ and they satisfy $\frak{u}_{\ell,t}\, |\, \frak{v}_{\ell,t}^2-f$.
\end{definition}

Just like for the classical division polynomials, the coefficients of
$\frak{u}_{\ell,t}(X)$ and of $\frac{1}{y}\, \frak{v}_{\ell,t}(X)$ are in
$\F_p[x]/\tilde{\Delta}_{\ell, t}(x)$ (they do not depend on $y$) and we
can choose representatives of them that are polynomials of degree less
than $\deg\tilde{\Delta}_{\ell, t}(x)$. Hence, the bounds given in
Lemma~\ref{lem:deg_delta} are also valid for the non-generic division
polynomials; and since there are no denominators in the coefficients of
$\frak{v}_{\ell,t}(X)$, the other part of Lemma~\ref{lem:deg_delta} also
holds trivially.

The non-generic division polynomials can be computed efficiently, once
the classical division polynomials are known: the polynomial
$\tilde{\Delta}_{\ell, t}(x)$ can be easily deduced, and then working in
the quotient algebra yields the result in a time $\softO_g(\ell^2)$,
which is negligible compared to the other parts of the algorithm.

\paragraph{Polynomial system derived from a normalized non-genericity
tuple.}

We now want to write a polynomial system whose solutions are the
$\ell$-torsion elements following a given normalized non-genericity
tuple $(w,\lambda,t,\epsilon,M)$.

First, we need variables for the coordinates of the $P_i$ such that the
$\ell$-torsion element is $D=\sum_{i=1}^k\lambda_i(P_i-\infty)$, with $P_i\ne
\pm P_j$ for all $i\ne j$.
As a consequence, we introduce $2k$ variables for the
coordinates $(x_i,y_i)$ of all the points $P_i$. Since these points are
on the curve, they verify $y_i^2=f(x_i)$, however if $P_i$ is a
ramification point this can be simplified into $y_i=0=f(x_i)$, which
avoids the multiplicities. We get a first set of equations
\begin{equation}\label{eq:eq1}\tag{Sys.1}
    \left\{
    \begin{array}{rl}
    y_i^2=f(x_i)\not=0,\quad &\text{for all $i$ in $[1,k]$ such that
    $\epsilon_i=0$}, \\
    y_i=f(x_i)=0,\quad &\text{for all $i$ in $[1,k]$ such that 
        $\epsilon_i=1$}.\\
    \end{array}\right.
\end{equation}

As we just discussed, we must model the fact that $P_i\ne \pm P_j$ for $i\ne
j$. This is done via the following set of inequalities:
\begin{equation}\label{eq:eq2}\tag{Sys.2}
  x_i\ne x_j,\quad \text{for all $i,j$ in $[1,k]$ such that $i\neq j$}. 
\end{equation}

The next step is to enforce the fact that the element
$\lambda_i\,\ell\cdot(P_i-\infty)$ is of weight $t_i$.
For the indices for which $t_i<g$, this is encoded by the equation
defining $V_{\lambda_i\ell,t_i}$:
\begin{equation}\label{eq:eq3}\tag{Sys.3}
    \left\{
    \begin{array}{l}
    \tilde{\Delta}_{\lambda_i\ell, t_i}(x_i) = 0,\\
    d_{t_i}(x_i) \ne 0,\\
    \end{array}
    \right.
        \quad \text{for all $i$
    in $[1,k]$ such that $t_i<g$},
\end{equation}
while for the indices for which $t_i=g$, this is encoded by the
non-vanishing of the leading coefficient of the Cantor polynomial in
degree $\lambda_i\ell$:
\begin{equation}\label{eq:eq4}\tag{Sys.4}
    d_{g}(x_i) \ne 0 ,\quad \text{for all $i$ in $[1,k]$ such that
    $t_i=g$}.
\end{equation}

We now need to model the fact that the $\lambda_i\,\ell\cdot(P_i-\infty)$
satisfy the conditions given by the matrix $M$. We write
$\lambda_i\,\ell\cdot(P_i-\infty)=\langle u_i(X), v_i(X)\rangle$ in
Mumford representation, where $u_i(X)$ and $v_i(X)$ are Cantor's classical
division polynomials in degree $\lambda_i\ell$ if $t_i=g$ or the
non-generic division polynomials
$\frak{u}_{\lambda_i\ell,t_i}$ and $\frak{v}_{\lambda_i\ell,t_i}$, 
if $t_i<g$. In both cases, these are polynomials in $X$ whose coefficients are
polynomials in $x_i$ and $y_i$. Recall that the entries of $M$, denoted by
$(m_{ij})_{i\in[1,k], j\in[1,s]}$, are such that $m_{ij}$ is the order of $Q_j$
in $\lambda_i\,\ell\cdot(P_i-\infty)$ if it is positive, or the opposite of the
order of $Q_j'$ if it is negative. To this effect, we introduce $s$ new
variables $\xi_j$ for the abscissae of the $Q_j$, and the following
equations enforce the multiplicities:
\begin{align}
    u_i^{(n)}(\xi_j)=0,\quad
        & \text{for all $i,j$ in $[1,k]\times[1,s]$ and for all $n \le
        \vert m_{ij}\vert-1$}\label{eq:eq5}\tag{Sys.5} \\
    u_i^{(\vert m_{ij}\vert)}(\xi_j) \ne 0,\quad
        & \text{for all $i,j$ in $[1,k]\times[1,s]$}\label{eq:eq6}\tag{Sys.6} \\
    v_i(\xi_j)-v_{i'}(\xi_j)=0,\quad
        & \text{for all $i,i',j$ such that $
        m_{ij}m_{i'j}>0$}\label{eq:eq7}\tag{Sys.7} \\
    v_i(\xi_j)+v_{i'}(\xi_j)=0,\quad
        & \text{for all $i,i',j$ such that $
        m_{ij}m_{i'j}<0$}\label{eq:eq8}\tag{Sys.8} \\
    \xi_j \not = \xi_{j'},\quad &
        \text{for all $j\not=j'$}.\label{eq:eq9}\tag{Sys.9}
\end{align} 
In Equations~\ref{eq:eq5} and~\ref{eq:eq6}, the notation $u_i^{(n)}$ is
for the $n$-th derivative of $u_i$. This simple way of describing
multiple roots is valid because the characteristic is large enough.

The next step of the construction is to consider a semi-reduced version
of the divisor $\ell\cdot D = \sum_{i=1}^k\lambda_i\,\ell\cdot(P_i-\infty)$. This
semi-reduction process can be described directly on the matrix $M$: if
two entries in a same column have opposite signs, a semi-reduction can
occur (corresponding to subtracting the principal divisor of the function
$(x-\xi_j)$), thus reducing the difference between these entries. This
semi-reduction can continue until one of these two entries reaches zero.
This whole process can be repeated as long as there are still columns
containing entries with opposite signs. This is formalized in
Algorithm~\ref{algo:semireduction}, which takes as input a matrix $M$ and
returns a matrix $\widetilde{M}$ with the same dimensions such that if $M$ describes all the
multiplicities in a divisor, then $\widetilde{M}$ describes all the
multiplicities of a semi-reduced divisor equivalent to the input divisor.
More precisely, the matrix $\widetilde M$ satisfies the following properties:
(1) In each column, all elements are nonnegative; (2) The sum of
the rows of $M$ equals the sum of the rows of $\widetilde M$; (3) For all $i,j$
such that $M_{i,j}$ is nonnegative,
$\widetilde M_{ij}\leq M_{ij}$.

\begin{algorithm}[ht]
 \KwData{$M$ the $k\times s$ matrix of shared points of the system}
 \KwResult{$\widetilde{M}$, the matrix after semi-reduction}
 $\widetilde M\gets$ $k\times s$ zero matrix \\
 \For{$j$ from $1$ to $s$}{
   $\mu_j\leftarrow \sum_{i=1}^k M_{ij} $ \\
   \For{$i$ from $1$ to $k$}{
     \eIf{$M_{ij} >0$}{
       $\widetilde{M}_{ij} \leftarrow \min(
       M_{ij},\mu_j)$ \\
       $\mu_j\leftarrow \mu_j-\widetilde M_{ij}$}
       {$\widetilde{M}_{ij} \leftarrow 0$}
   }
 }
 \Return{$\widetilde{M}$}
 \caption{Reducing the matrix of shared points
   \label{algo:semireduction}}
\end{algorithm}

The function $\varphi$ that we will use to model the principality of the
divisor $\ell\cdot D$ will have two parts: a product of ``vertical
lines'' corresponding to semi-reductions, and a part of the form
$P(X)+YQ(X)$, where $P$ and $Q$ are coprime.  Modelling the existence of
this second part requires to introduce new entities $\widetilde{u}_i$ that
are the $u_i$ polynomials from which we remove the linear factors coming
from semi-reduction as described by $\widetilde{M}$. Formally, we have
the following equations, defining $\widetilde{u}_i$:
\begin{equation}\label{eq:eq10}\tag{Sys.10}
    u_i(X) = \widetilde{u}_i(X) \prod_{j=1}^s (X-\xi_j)^{\vert m_{ij}
    \vert - \widetilde{m}_{ij}},\quad
    \text{for all $i\in[1,k]$}.
\end{equation}
Indeed, by definition of the matrix $M$, the factor $(X-\xi_j)^{\vert
m_{ij}\vert}$ divides exactly $u_i(X)$, and the factor $(X-\xi_j)^{
\widetilde{m}_{ij}}$ divides exactly $\widetilde{u}_i(X)$. In order
to express these conditions efficiently in the polynomial system, we
introduce new variables for the coefficients of the $\widetilde{u}_i$
polynomials.

Since we are now dealing with a semi-reduced divisor, we can consider its
Mumford representation, {\em i.e.} two polynomials $U$ and $V$ with the
following properties:
\begin{align}
    & U = \prod_{i=1}^k \widetilde{u}_i, \quad U | V^2-f,
    \label{eq:eq11}\tag{Sys.11} \\
    & V \equiv v_i \bmod \widetilde{u}_i, \quad
        \text{for all $i\in[1,k]$}. \label{eq:eq12}\tag{Sys.12}
\end{align}
The expression of $U$ is simple enough, so we do not have to introduce
new variables for its coefficients. However, this will be necessary for
the coefficients of the $V$ polynomial. Finally, in order to impose that
the semi-reduced part of $\varphi$ has exactly the zeros described by
this divisor, we have the equation
\begin{equation}\label{eq:eq13}\tag{Sys.13}
    P + QV \equiv 0 \bmod U,
\end{equation}
which is expressed with new variables for the coefficients of $P$ and
$Q$.

In Table~\ref{tab:variables}, we summarize all the variables used in the
polynomial system and count them. A key quantity for this count is the
degree of $U$ which is the sum of the degrees of the $\widetilde{u}_i$'s.
It can be computed directly from the tuple $(w,\lambda,t,\epsilon,M)$. Then, to ensure
existence and unicity of the $V$ polynomial to represent the semi-reduced
divisor, we have to impose that $\deg V < \deg U$, so that we have
exactly $\deg U$ variables for the coefficients of $V$. For the
polynomials $P$ and $Q$, we need the degree of $P^2-Q^2f$ to be exactly
$\deg U$. After a normalization like in Section~\ref{sec:generic}
depending on the parity of $\deg U$, we get $\deg U-g$ variables for
their coefficients.

\begin{table}[ht]
    \centerline{%
    \begin{tabular}{lll}
        \hline
        Variables & Number of variables & Bound\\
        \hline
        Coordinates $(x_i, y_i)$ of $P_i$  & $2k$ & $2g$ \\
        Abscissae $\xi_j$ of shared points & $s$, column-size of the matrix $M$ & $g^2$\\ 
        Coefficients of the $\widetilde{u}_i$ polynomials & $\deg U = \sum_i (t_i
        - \sum_j (\vert m_{ij} \vert - \widetilde{m}_{ij}) )$ & $g^2$\\
        Coefficients of the $V$ polynomial & $\deg U$ & $g^2$ \\
        Coefficients of the $P$ and $Q$ polynomials & $\deg U - g$ &
        $g^2-g$ \\
        \hline
        Total   & $s+2k+3\deg U-g$ & $4g^2+g$ \\
        \hline
    \end{tabular}}
    \caption{Summary of the variables in the polynomial system
    corresponding to a normalized non-genericity tuple
    $(w,\lambda,t,\epsilon,M)$.}
    \label{tab:variables}
\end{table}

In order to apply Proposition~\ref{prop:mainpolsys}, we need to evaluate
the degrees of all the equations and inequalities that we have listed,
with respect to two groups of variables: The first group contains just
the variables $x_i$ and $y_i$, and we will denote $\deg_1(f)$ the
degree of a polynomial $f$ with respect to those variables (said otherwise,
$\deg_1(f)$ is the degree of $f$ if we consider only the symbols $x_i, y_i$ as
variables, and all the other indeterminates are considered as parameters). The
second group of variables contains all the
other indeterminates and the degree with respect to this group is denoted by
$\deg_2$.

The crucial point is to ensure that each polynomial equation has a $\deg_1$
bounded by $O_g(\ell^3)$, while $\deg_2$ is bounded by $O_g(1)$.  For the
inequalities, we require the same degree conditions: Indeed, an inequality
$f\ne 0$ can be modeled by the equality $T\cdot f - 1 = 0$, where $T$ is a
fresh variable that belongs to our second group of variables. This trick
requires only one more variable for each inequality and the degree of the
equation $T\cdot f -1 = 0$ is only one more than the degree of the inequality.
Since the number of inequalities is bounded by $O_g(1)$, the number of extra
variables required in the second group will not impact the asymptotic
complexity (the second group already contains $O_g(1)$ variables). We
remark that the input of the geometric resolution algorithm over fields of characteristic
$0$ in \cite{GiuLecSal01} allows inequalities. However, we use the
aforementioned trick to model inequalities by equalities since the solving method that we use is
the variant for positive characteristic whose complexity analysis is given in
\cite[Thm.~4.8]{cafure2006fast}.

The number of equations and inequalities and their degrees with respect to the
two groups of variables can be easily checked and are summarized in Table~\ref{tab:degrees}. 

\begin{table}[ht]
    \centerline{%
    \begin{tabular}{llll}
        \hline
        Equations reference & Number of equations (and bound) & $\deg_1$ & $\deg_2$ \\
        \hline
        Eq. and Ineq. \ref{eq:eq1} & $2k\le 2g$  & $2g+1$ & $0$ \\
	InEq. \ref{eq:eq2} & $k(k-1)/2\le g(g-1)/2$ & $1$ & $0$ \\
        Eq. and Ineq. \ref{eq:eq3}   & $\le 2g$ & $O_g(\ell^3)$ & $0$ \\
        InEq. \ref{eq:eq4} & $\le g$ & $O_g(\ell^3)$ & $0$ \\
        Eq. \ref{eq:eq5}   & $\sum_{i=1}^k \sum_{j=1}^s |m_{ij}|\le g^4$ &
                    $O_g(\ell^3)$ & $\le g$ \\
        InEq. \ref{eq:eq6} & $ks\le g^3$ & $O_g(\ell^3)$ & $\le g$ \\
        Eq. \ref{eq:eq7} and \ref{eq:eq8} & $\le k^2s\le g^4$ & $O_g(\ell^3)$ & $\le g$\\
        InEq. \ref{eq:eq9} & $\le s^2\le g^4$ & $0$ & $1$ \\
        Eq. \ref{eq:eq10}  & $\sum_{i=1}^k t_i\le g^2$ & $O_g(\ell^3)$ & $\le g$\\
        Eq. \ref{eq:eq11}  & $\deg U \le g^2$ & $0$ & $O(g^3)$ \\
        Eq. \ref{eq:eq12} & $\sum_{i=1}^k \deg \widetilde{u}_i \le g^2$ &
                    $O_g(\ell^3)$ & $O(g^2)$\\
        Eq. \ref{eq:eq13} & $\deg U\le g^2$ & $0$ & $O(g^3)$ \\
        \hline
    \end{tabular}}
    \caption{Summary of the degrees of the equations in the polynomial
    system corresponding to a normalized non-genericity tuple
    $(w,\lambda,t,\epsilon,M)$.}
    \label{tab:degrees}
\end{table}

Finally, since we have been very careful in describing elements that are
$\ell$-torsion points on $J$, without room for parasite solutions or
multiplicities, we can again appeal to the finite and étale property of
multiplication by $\ell$ in $J$ to deduce that the system is
$0$-dimensional and radical. Therefore, by
Proposition~\ref{prop:mainpolsys}, each system can be solved in the
claimed complexity bound.
To conclude the proof of Proposition~\ref{prop:Jl}, and hence of our main result, we
need a few more observations.

First, notice that the solutions of our polynomial systems can be grouped by
weight of the $\ell$-torsion divisor: once a geometric resolution of two
$0$-dimensional sets $V_1$ and $V_2$ are known, a geometric resolution of
$V_1\cap V_2$ can be computed very efficiently. The strategy to do so is to
change the primitive element of the geometric resolutions for a random element,
so that both resolution share the same primitive element. This can done within
complexity linear in the number of variables and polynomial in $\deg(V_1\cup
V_2)$ using
Algorithm~6 in \cite{GiuLecSal01}. Then, computing the LCM of the univariate
polynomials of the geometric resolutions and interpolating the parametrization provides a
geometric resolution of $V_1 \cup V_2$.
Using this procedure for regrouping the solutions of all the systems derived
from the non-degeneracy tuples with the same weight $w$ provides geometric
solutions of $J_w[\ell]$ within the claimed complexity.

Finally, we need to
transform the Monte Carlo algorithm from Proposition~\ref{prop:mainpolsys} in a
Las Vegas algorithm. This can be easily achieved since the probability that the
Monte Carlo algorithm succeeds is bounded below by a quantity which does not
depend on the input size, and the output can be verified since we know that
the sum of the degrees of the varieties $J_w[\ell]$ for $w\in[1,g]$ must equal
$\ell^{2g}-1$. Consequently, once all polynomial systems corresponding to
non-generic situations have been solved, it is easy to count the number of
$\ell$-torsion elements found and to check that none of them is missing by
comparing their number with the theoretical value $\ell^{2 g}-1$. The Las Vegas algorithm consists in repeating the
Monte Carlo algorithm until the result is verified and is correct (i.e. all
elements found are $\ell$-torsion elements and none of them is missing). The expected
complexity of the Las Vegas variant equals the complexity of the Monte Carlo variant up
to multiplication by a constant. This concludes the proof of
Proposition~\ref{prop:Jl}.

\section{Proof of Lemma \ref{lem:deg_delta}}\label{sec:proof}   

We restate Lemma \ref{lem:deg_delta}: for a non-necessarily prime integer
$\ell>g$, the polynomial $\delta_{\ell}(X)$
of degree $g$ in $X$ has coefficients in $\F_q[x]$ whose degrees in $x$
are bounded by $g\ell^3/3+\Og(\ell^2)$; the polynomial
$\varepsilon_{\ell}(X)/y$ has coefficients in $\F_q(x)$ such that the
degrees of the numerators and the denominators have degrees bounded by
$2g\ell^3/3+\Og(\ell^2)$.  Furthermore, the roots of the denominators are
roots of the leading coefficient of $\delta_\ell(X)$.

This section strongly relies on \cite{Ca94}. From here, all the
notations, references to propositions, expressions or definitions are
taken from this paper, except for the notation of the base field that we
choose to be $\F_q$, and the torsion level is denoted $\ell$ instead of
$r$.

\begin{proof}
    Technicalities arise from the normalizations required to manipulate
    entities that are polynomials in $x$ (and not rational fractions),
    without odd power of $y$ involved. In Cantor's article, this
    normalization often depends on the parity of $\ell-g$. We will
    concentrate on the case where $g$ is even; for the other case some
    formulae must be adapted, multiplying or dividing by $2y$ at various
    places.

    We recall that $\nu_\ell=(\ell^2-\ell-g^2+g)/2$ as defined in (8.5),
    so that $\nu_\ell=\nu_{\ell-1} + \ell-1$.
    By combining (8.7) and (2.3), we obtain
    $$\delta_{\ell}(z)= \frac{(2y)^{2\nu_\ell}}{(4y^2z)^\ell}
    \left( A_\ell(4y^2z)^2 - B_\ell(4y^2z)^2E(4y^2z) \right),$$
    where $A_\ell$ and $B_\ell$ are unnormalized versions of
    $\alpha_\ell$ and $\beta_\ell$ defined in (8.7) and $E(z)$ is
    defined by $E(z)=f(x-z)$. For our purpose, it is easier to deal with
    non-truncated versions of $\alpha_\ell$ and $\beta_\ell$. Let us then
    introduce the following quantities, inspired from (8.7):
    $$ \bar{\alpha}_\ell(z) = 2(2y)^{\nu_{\ell-1}-1}A_\ell(4y^2z), \quad
    \text{and}\quad \bar{\beta}_\ell(z) = (2y)^{\nu_{\ell-1}}B_\ell(4y^2z),$$
    so that $\delta_\ell$ can be rewritten as
    $$ \delta_\ell(z) = 
    \frac{1}{4z^\ell}\left(
    \bar{\alpha}_\ell(z)^2 - \frac{1}{y^2}\bar{\beta}_\ell(z)^2E(4y^2z)
    \right).$$
    By Theorem (8.15), the coefficients of $\alpha_\ell(z)$ and
    $\beta_\ell(z)$ are polynomials in $\F_q[x]$, and the proof is also
    valid for the non-truncated versions $\bar{\alpha}_\ell(z)$ and
    $\bar{\beta}_\ell(z)$. Note that here we use the fact that $g$ is
    even, so that the potential adjusting factor $(2y)^g$ is an even
    power of $y$ that can be rewritten in terms of $f(x)$.
    The polynomial $E(4y^2z)$ has coefficients which are polynomials in
    $x$ of degree bounded by $(2g+1)^2$. Therefore, in order to obtain
    a degree bound for the
    coefficients of $\delta_\ell(z)$, it is sufficient to bound the
    coefficients of $\bar{\alpha}_\ell(z)$ and $\bar{\beta}_\ell(z)$.
    For convenience, for any polynomial $P$ in $z$, with coefficients in
    $\F_q[x]$, we write $\MDC(P)$ the maximum degree in $x$ of its
    coefficients.

    We are interested in a bound at fixed genus $g$ and when $\ell$ grows
    to infinity and we use the $\Og$ notation. For $k$ in $[1,\ell]$, we
    will use an induction to bound $\MDC(\bar{\alpha}_k(z))$ and
    $\MDC(\bar{\beta}_k(z))$. For $k\le g+1$, none of these quantities
    depends on
    $\ell$, so that all the degrees can be bounded by an expression in
    $g$ only, {\em i.e.} in $\Og(1)$. For $k\ge g+1$, we start from the
    equation (3.14) where we substitute $k$ for $r$, we evaluate it at
    $4y^2z$, and we multiply by $(2y)^{2\nu_k-1}$, so that we
    obtain:
    $$ (2y)^{\nu_k}f_{k-1}\bar\alpha_{k+1}(z) =
       (2y)^{\nu_k+k-1}f_{k}\bar\alpha_{k}(z) -
       (2y)^{\nu_k+2k-1}f_{k+1}z\bar\alpha_{k-1}(z),$$
    where all the polynomials have coefficients in $\F_q[x]$. The
    expression for $\bar{\beta}_k$ is exactly the same, but we have to
    multiply the expression in (3.14) by $(2y)^{2\nu_k}$ in that case.
    By (8.7), (8.16)
    and Theorems (8.15) and (8.17), for any $k$, the quantity
    $(2y)^{\nu_k}f_k$ is a polynomial in $x$ of degree $g(k^2-g^2)/2$. Therefore the
    right-hand-side of the recurrence relation have coefficients with
    degrees bounded by an expression of the form
    $\max\big(\MDC(\bar\alpha_k(z)),\MDC(\bar\alpha_{k-1}(z))\big) +
    gk^2/2$ up to a term linear in $k$ and cubic in $g$. And we finally get:
    $$ \MDC(\bar\alpha_{k+1}(z)) \le \max\big(\MDC(\bar\alpha_k(z)),
    \MDC(\bar\alpha_{k-1}(z))\big) + gk^2/2 + \Err_g(k),$$
    where $\Err_g(k)$ is a polynomial linear in $k$ and cubic in $g$.
    Again, this inequality is also valid for $\bar\beta_k$.
    By induction, we then get the following bounds:
    $$ \MDC(\bar\alpha_{\ell}(z)) \le \frac{g\ell^3}{6} + \Og(\ell^2),\quad
    \text{and}\quad
    \MDC(\bar\beta_{\ell}(z)) \le \frac{g\ell^3}{6} + \Og(\ell^2).$$
    We can then propagate these bounds in the expression of $\delta_\ell$
    and we get $\MDC(\delta_\ell(z)) \le \max (2\MDC(\bar\alpha_{\ell}(z)),
    2\MDC(\bar\beta_{\ell}(z)) + \MDC(E(4y^2z))$, so that we get the
    claimed result concerning $\delta_\ell$.
    \bigskip

    The fact that $\varepsilon_\ell(z)/y$ has coefficients in $\F_q(x)$
    follows directly from Equation (8.13) that we recall here:
    $$
    \varepsilon_{\ell}(z) = y \frac
    {z \left(\psi^2_{\ell-1}\delta_{\ell+1}(z)
       - \psi^2_{\ell+1}\delta_{\ell-1}(z)\right)}
    {\psi_{\ell-1}\psi^2_{\ell}\psi_{\ell+1}}
    \bmod \delta_{\ell}(z).$$
    By (8.11), the leading coefficient of $\delta_\ell(z)$ is
    $-(4y^2)^g\psi_\ell^2$, so that the property on the denominator of
    $\varepsilon_\ell$ can not be easily deduced from this equation, due
    to the presence of $\psi_{\ell-1}$ and $\psi_{\ell+1}$ before the
    reduction modulo $\delta_\ell(z)$ occurs. We will prove it below,
    with a direct geometric argument, but we first give bounds on the
    degrees of the coefficients of the numerator and the denominator.

    The polynomial $\delta_\ell(z)$ is of degree $g$ in $z$, so that at
    most two steps of reduction are required to reduce the degree of
    $\varepsilon_\ell$ to strictly less than $g$. In fact,
    it can be checked that the leading coefficient of
    $\psi^2_{\ell-1}\delta_{\ell+1}(z)-\psi^2_{\ell+1}\delta_{\ell-1}(z)$
    is zero, so that there is at most only one reduction step.
    This reduction accounts for an increase of the
    coefficients' degrees in $x$ by at most $\MDC(\delta_\ell)$ in the
    numerator and an increase of the degree of the leading coefficient of
    $\delta_\ell$ in the denominator. Since $\deg \psi_\ell = g\ell^2/2 +
    O_g(\ell)$, the degrees of the
    coefficients of the numerator of $\varepsilon_\ell(z)$ are bounded by
    $\frac{2}{3}g\ell^3 + O_g(\ell^2)$, and the degree of the denominator
    is bounded by $3g\ell^2+O_g(\ell)$.

    It remains to prove the claim on the roots of the denominator of the
    coefficients of $\varepsilon_\ell(z)/y$. For this, we consider the
    map from the affine part of the curve $\C_{\aff}$ to $J$ seen as a
    projective Abelian variety, that sends a point $(x,y)$ to
    $[\ell]((x,y)-\infty)$. One of the main points of Cantor's article is
    that if $\psi_\ell(x)\ne 0$, then the image by this map is in
    $J\setminus \Theta$, where $\Theta\subset J$ is the subvariety of elements
    of weight less than $g$. On this open subset, Mumford coordinates with
    a monic $u$ of degree $g$ and $v$ of degree at most $g-1$ give a
    local set of coordinates that we use to describe the map.
    The $i$-th coefficient of $v$ is $y$ times
    a rational fraction $c_i$ in $x$ that gives a finite value at any $x$
    for which $\psi_\ell(x)\ne 0$.  Therefore, any root of the
    denominator of $c_i$ is a root of $\psi_\ell$. By Theorem (8.35),
    the Mumford $v$-polynomial that we are considering is
    $\varepsilon_\ell$ up to a renormalization that will only introduce
    additional powers of $4y^2$ in the denominator. Therefore, any root
    of the denominator of the coefficients of $\varepsilon_\ell$ is a
    root of $\psi_\ell$ or of $4y^2$, and both divide the leading
    coefficient of $\delta_\ell$ by (8.11).
\end{proof}

\paragraph*{Remark.} 

The bounds that we obtain are not tight: from~\cite{Ca94}, we know that
the leading and constant coefficients are in $O_g(\ell^2)$ instead of
$O_g(\ell^3)$. We ran experiments that allow us to conjecture precise
degrees for the other coefficients.
In these experiments, instead of developing $\delta_{\ell}\left(\frac{x-X}{4y^2} \right)$
 and $\varepsilon_{\ell}\left(\frac{x-X}{4y^2} \right)$ to compute the $d_i$'s 
and $e_i$'s, we computed $\ell\cdot((x,y)-\infty)$ over the function field of the curve. This
does not exactly yield the $d_i$'s and $e_i$'s because we actually get $d_i/d_g$
and $e_i/e_g$, thus possibly missing a common factor in all the $d_i$'s and 
$e_i$'s.
We denote $\tilde{d}_i$ and $\tilde{e}_i$ the numerators and 
denominators of the aforementioned fractions, and we compute their degrees for
each pair $(g,\ell)$ with $g\le 8$ and $g<\ell \le g+20$ (which includes non 
prime values of $\ell$). We found that the degrees of the $ \tilde{d}_i$ are 
consecutive from $\deg( \tilde{d}_g)$ up to $\deg( \tilde{d}_0)=\deg( \tilde{d}_g)+g$, 
with the following values for $\deg( \tilde{d}_0)$.

$$ \left\lbrace  \begin{aligned}
   & g\ell^2-g^3+g & \mbox{if}\: g-\ell\: \mbox{is even} \\
   & g\ell^2-g^3+2g^2-1 & \mbox{if}\: g-\ell\: \mbox{is odd}
\end{aligned} \right. $$

Concerning the $ \tilde{e}_i$, the degrees are consecutive from $\deg( \tilde{e}_{g-1})$ up to
$\deg( \tilde{e}_0)=\deg( \tilde{e}_g)$, the latter being equal to 
 $$ \left\lbrace  \begin{aligned}
   & 3(g\ell^2-g^3)/2+2g^2-g-1 & \mbox{if}\: g-\ell\: \mbox{is even} \\
   & 3(g\ell^2-g^3)/2+3g^2-g/2-1 & \mbox{if}\: g-\ell\: \mbox{is odd}
\end{aligned} \right. $$

Cantor~\cite{Ca94} gave simple expressions for the leading term and constant term of
$\delta_{\ell}$ (respectively $-(4y^2)^g\psi_{\ell}^2$ and
$(-1)^{g+1}\psi_{\ell-1}\psi_{\ell+1}$), from which we can deduce the
degrees of $d_0$ and $d_g$ by evaluating $\delta_\ell$ at $(x-X)/4y^2$.
Assuming that there is no common factor to all the $d_i$'s when $g-\ell$
is even, while the GCD of all the $d_i$'s is $f^{g-1}$ when $g-\ell$ is
odd, these theoretical degrees are consistent with our experiments.

\bibliographystyle{abbrv}
\bibliography{./biblio}
\end{document}